\newtheorem{theorem}{Theorem}[section]
\newtheorem{lemma}[theorem]{Lemma}
\newtheorem*{conjecture}{Conjecture}
\newtheorem{alphatheorem}{Theorem}
\theoremstyle{definition}
\newtheorem{definition}[theorem]{Definition}
\newtheorem{example}[theorem]{Example}
\theoremstyle{remark}
\numberwithin{equation}{section}
\begin{document}

\title{Weak Modularity and $\widetilde{A}_n$ Buildings}

\author{Zachary Munro}
\thanks{This work was partially supported by the grant 346300 for IMPAN from the Simons Foundation and the matching 2015-2019 Polish MNiSW fund.}
\address{Department of Mathematics and Statistics, McGill University, Montreal, QC, Canada}
\curraddr{Burnside Hall 1017, 805 Sherbrooke St W, Montreal, QC H3A 2K6, Canada}
\email{zachary.munro@mail.mcgill.ca}




\keywords{Group theory, Coxeter groups, weak modularity}

\begin{abstract}
The $\widetilde{A}_n$ Coxeter groups are known to not be systolic or cocompactly cubulated for $n\geq 3$. We prove that these groups act geometrically on weakly modular graphs, a weak notion of nonpositive curvature generalizing the 1-skeleta of $\mathrm{CAT}(0)$ cube complexes and systolic complexes. To prove weak modularity we describe the canonical emeddings of the 1-skeleta of $\widetilde{A}_n$ Coxeter complexes into the Euclidean spaces $\mathbb{R}^{n+1}$. We also prove weak modularity for buildings of type $\widetilde{A}_3$. 
\end{abstract}

\maketitle

\section*{Introduction}
Coxeter groups, a generalization of discrete reflection groups, are known to satisfy a number of nonpositive curvature properties. In his PhD thesis \cite{moussong}, Moussong proved finitely generated Coxeter groups are $\mathrm{CAT}(0)$. Niblo and Reeves proved in \cite{nibloreeves} that Coxeter groups act properly on $\mathrm{CAT}(0)$ cube complexes. However, there are known obstructions to Coxeter groups acting properly and cocompactly on $\mathrm{CAT}(0)$ cube complexes. The group $\widetilde{A}_2$ is an example of such a group. Simplicial nonpositive curvature conditions are known to hold for some Coxeter groups. In particular, Coxeter groups with all defining coefficients $m_{ij}$ greater than or equal to three are systolic. However, work of Przytycki and Schwer \cite{piotrschwer}; Karrer, Schwer, and Struyve \cite{schwer}; and Wilks \cite{wilks} show the triangle groups 244, 245, and 255 are not systolic. Relevant to this article, the groups $\widetilde{A}_n$ for $n\geq 3$ are also not systolic, following from work of Januszkiewicz and \'Swi\k{a}tkowski \cite{januszkiewicz}.

Weak modularity is a notion of nonpositive curvature for graphs which generalizes the structure of 1-skeleta of $\mathrm{CAT}(0)$ cube complexes and systolic complexes, recently studied in \cite{weakmodularity}. A group $G$ is \textit{weakly modular} if it acts properly and cocompactly by automorphisms on a weakly modular graph. In the absence of proper cocompact actions on $\mathrm{CAT}(0)$ cube complexes and systolic complexes, we prove that the $\widetilde{A}_n$ Coxeter groups are weakly modular. In fact, we first give a description of the 1-skeleta of the $\widetilde{A}_n$ Coxeter complexes in the following theorem. 

\begin{alphatheorem}
\label{1skeleton}
The $1$-skeleton of the $\widetilde{A}_n$ Coxeter complex can be described as follows. The vertices are of the form $(x_0,\ldots,x_n)\in \mathbb{Z}^{n+1}$ where $x_0+\ldots+x_n=0$ and $x_i-x_j\equiv 0\mod (n+1)$ for all $i$ and $j$. Two vertices are adjacent if and only if they differ by a vector of the form $(e_0,\ldots,e_n)\in \mathbb{Z}^{n+1}$ where $e_0+\ldots+e_n=0$, $e_i-e_j\equiv 0\mod (n+1)\mathbb{Z}$ and $\max_i e_i-\min_je_j = n+1$.
\end{alphatheorem}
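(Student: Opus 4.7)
The plan is to realize the Coxeter complex of $\widetilde{A}_n$ as the standard simplicial tessellation of the hyperplane $H = \{x \in \mathbb{R}^{n+1} : \sum_i x_i = 0\}$, and then to rescale by $n+1$ so that vertices and edges acquire the integer-coordinate form in the statement.

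First I would set up the canonical action. The group $\widetilde{A}_n$ is isomorphic to the group $W$ of affine transformations of $H$ generated by permutations of coordinates (giving a copy of $S_{n+1}$) and translations by the $A_n$ root lattice $Q = H \cap \mathbb{Z}^{n+1}$; thus $W$ is the semidirect product of $S_{n+1}$ and $Q$, with $S_{n+1}$ acting on $Q$ by coordinate permutation. Take the fundamental alcove $A$ to be the Euclidean simplex with vertices $0, \omega_1, \ldots, \omega_n$, where the $i$-th fundamental weight is $\omega_i = \frac{1}{n+1}(n+1-i, \ldots, n+1-i, -i, \ldots, -i)$, with $i$ coordinates equal to $n+1-i$ and $n+1-i$ coordinates equal to $-i$. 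A direct check shows $\sigma\omega_i - \omega_i \in Q$ for every $\sigma \in S_{n+1}$, while $\omega_i - \omega_j \notin Q$ for distinct $i, j \in \{0,\ldots,n\}$ (with $\omega_0 := 0$), so the vertex set of the Coxeter complex is $W\cdot\{0,\omega_1,\ldots,\omega_n\} = \bigsqcup_{i=0}^n(\omega_i+Q) = P$, the $A_n$ weight lattice.

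Next I would rescale. Each $(n+1)\omega_i$ has entries all congruent to $-i \pmod{n+1}$, while $(n+1)Q$ consists of integer vectors in $H$ with entries divisible by $n+1$. Assembling these $n+1$ cosets shows $(n+1)P$ equals the set $V$ in the statement. For the edges, $W$-equivariance reduces matters to describing the neighbours of $0$. Two vertices are adjacent iff they co-occur as vertices of some alcove, and an alcove has $0$ as a vertex iff it has the form $wA$ with $w \cdot v = 0$ for some vertex $v$ of $A$. Since $W \cdot 0 = Q$ and $\omega_i \notin Q$ for $i \geq 1$, only $v = 0$ is possible, forcing $w \in \mathrm{Stab}(0) = S_{n+1}$. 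Thus the alcoves containing $0$ are exactly the $(n+1)!$ translates $\sigma A$ with $\sigma \in S_{n+1}$, and the neighbours of $0$ form $\bigcup_{k=1}^n S_{n+1}\cdot \omega_k$.

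After rescaling, these neighbours become precisely the integer vectors with $k$ coordinates equal to $n+1-k$ and $n+1-k$ coordinates equal to $-k$ for some $1 \leq k \leq n$. Conversely, any $(e_0,\ldots,e_n) \in V$ with $\max_i e_i - \min_j e_j = n+1$ has all its coordinates equal to one of exactly two values differing by $n+1$ (since all coordinate differences are multiples of $n+1$), and the constraint $\sum e_i = 0$ forces those values to be $n+1-k$ and $-k$ with the stated multiplicities. The main obstacle will be the bookkeeping needed to show this list of edge vectors is independent of the type of the starting vertex: this requires checking that for every $i \neq j$ in $\{0,\ldots,n\}$ the vector $(n+1)(\omega_i - \omega_j)$ also has the two-value form with multiplicities $k$ and $n+1-k$, which a direct computation from the explicit formula for $\omega_i$ confirms with $k \equiv j - i \pmod{n+1}$.
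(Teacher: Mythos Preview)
Your approach is correct and takes a genuinely different route from the paper. The paper builds everything from the hyperplane arrangement $\{x_i - x_j = c : c \in (n+1)\mathbb{Z}\}$ in $\mathbb{R}^{n+1}$: it checks by hand that the reflections through these hyperplanes generate an affine reflection group, identifies that group with $\widetilde{A}_n$ by computing the orders of products of adjacent wall-reflections, and then invokes a general theorem (from Brown) identifying $\Sigma(W,S)$ with the cell complex of the arrangement restricted to $\sum x_i = 0$; vertices and edges are then read off directly as the $0$- and $1$-dimensional cells. You instead import the standard Lie-theoretic description of $\widetilde{A}_n$ as $S_{n+1}\ltimes Q$ acting on $H$ with fundamental alcove the simplex on $0,\omega_1,\ldots,\omega_n$, identify the vertex set with the weight lattice $P$, rescale, and obtain the edge vectors as the $S_{n+1}$-orbits of the $(n+1)(\omega_i-\omega_j)$. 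The paper's approach is more self-contained and feeds straight into the ``ladder'' picture used later (the rungs are literally the gaps between parallel hyperplanes); your approach is quicker once the affine Weyl group machinery is granted, makes the identification of the vertex set with $(n+1)P$ and its type decomposition $\bigsqcup_i(\omega_i+Q)$ transparent, and handles the non-transitivity of $W$ on vertices cleanly via the $\omega_i-\omega_j$ computation. If you present it this way you should cite a source (Bourbaki or Humphreys) for the alcove vertices and the semidirect-product description, since the paper does not assume these.
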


Coxeter groups are known to act properly and cocompactly on their associated Coxeter complex if the subgroups generated by all but one of the generators are finite, as is the case with $\widetilde{A}_n$. In particular, they act properly and cocompactly on the 1-skeleta of their associated complex. So we prove the following theorem, from which it follows that $\widetilde{A}_n$ Coxeter groups are weakly modular.

\begin{alphatheorem}
\label{wmcomplexes} 
The 1-skeleta of $\widetilde A_n$ Coxeter complexes are weakly modular.
\end{alphatheorem}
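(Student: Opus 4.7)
The plan is to verify the two defining conditions of weakly modular graphs --- the triangle condition and the quadrangle condition --- directly on the 1-skeleton, using the explicit coordinate description of Theorem A. By vertex-transitivity of the Coxeter group action, throughout we may assume the reference vertex $u$ is the origin.

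The first step is to establish a closed-form distance formula. For a nonempty proper subset $S \subset \{0, \ldots, n\}$, write $e_S$ for the edge vector with entries $n+1-|S|$ on $S$ and $-|S|$ off $S$. Set $M(v) := \max_i v_i - \min_j v_j$; then $M(e_S) = n+1$, and the residue condition of Theorem A forces $(n+1) \mid M(v)$ and forces any two distinct coordinate values of a vertex to differ by at least $n+1$. Subadditivity of $M$ yields $d(0,v) \geq M(v)/(n+1)$. For the reverse inequality, I would induct on $M(v)$: if $v \neq 0$, take $S$ equal to the argmax set of $v$ (a nonempty proper subset). A direct computation, using the gap property above, shows $M(v - e_S) = M(v) - (n+1)$, closing the induction. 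Therefore
\[ d(0,v) = \frac{\max_i v_i - \min_j v_j}{n+1}. \]

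Next, I would classify the common neighbors of an edge $v \sim w = v + e_S$. Solving $e_A - e_B = e_S$ for $A, B$ ranging over nonempty proper subsets yields exactly two families: (a) $A \supset S$, $A \neq S$, with $B = A \setminus S$; or (b) $A \subset S$, $A \neq S$, with $B = A \cup S^c$. The triangle condition then becomes the following claim: whenever $M(v) = M(v + e_S) > 0$, some $A$ from one of these two families satisfies $M(v + e_A) = M(v) - (n+1)$. I would verify this by a case analysis based on how the argmax and argmin sets of $v$ intersect $S$.

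The quadrangle condition is handled analogously: given $z$ with two distinct neighbors $v, w$ both at distance $d(0,z) - 1$, write $v - z = e_S$ and $w - z = e_T$ and apply the analogous classification of common neighbors of $v$ and $w$ to exhibit a common neighbor $x \neq z$ at distance $d(0,v) - 1$. The main obstacle will be the combinatorial case analysis required for the last two steps: the value of $M(v + e_A)$ depends delicately on how the argmax and argmin sets of $v$ meet $A$, and for the quadrangle condition additionally on the mutual position of $S$ and $T$. Careful bookkeeping of these intersections together with the distance formula should pin down the correct $A$ in each case, but the quadrangle condition will be the more delicate step because of the combined constraints from two edges at $z$.
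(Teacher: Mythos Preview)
Your plan is essentially the paper's approach: reduce to the origin by vertex-transitivity, prove the distance formula $d(0,v)=M(v)/(n+1)$, encode edges by proper nonempty subsets $S\subset\{0,\dots,n\}$, and then verify the two conditions combinatorially. Your derivation of the distance formula and your classification of common neighbours of an edge are both correct.

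The paper handles the same ingredients but packages them more efficiently via its ``ladder'' picture (coordinates placed on rungs differing by $n+1$, with an edge $e_S$ lifting the coordinates in $S$ one rung). With that picture the triangle condition is one line: after possibly swapping the two vertices, the edge set $S_{AB}$ misses some bottom-rung coordinates and all top-rung coordinates; increasing the remaining bottom coordinates $S_{\min}$ gives a common neighbour of lower height. This is exactly your family (a) with $A=S_{AB}\cup S_{\min}$, so your case analysis would rediscover it, but the ladder language avoids having to split on how $\operatorname{argmax}$ and $\operatorname{argmin}$ meet $S$.

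The main place your outline is heavier than necessary is the quadrangle condition. You propose to classify common neighbours of $v$ and $w$ (two vertices at distance~$2$), which means solving $e_A-e_B=e_T-e_S$ and then searching through the resulting families. The paper sidesteps this entirely: it works from the far vertex $X$ (your $z$) and writes down a single uniform candidate $W$, obtained by raising the bottom rung of $X$ by two and the middle rungs by one. A one-line check shows $W$ has height two less than $X$ and is adjacent to every height-decreasing neighbour of $X$ (in particular to both $Y$ and $Z$). No classification of common neighbours and no case analysis on $S,T$ is needed. Your approach would eventually work, but I would recommend adopting this direct construction instead; it turns the ``delicate'' step you flag into a two-sentence verification.
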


Finally, in dimension $n=3$, we extend the above theorem to buildings, simplicial complexes built from Coxeter complexes of some given Coxeter group. That is, we prove the following. 

\begin{alphatheorem}
\label{wmbuildings} 
The 1-skeleta of buildings of type $\widetilde{A}_3$ are weakly modular.
\end{alphatheorem}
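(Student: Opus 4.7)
The plan is to reduce weak modularity of a building $\Delta$ of type $\widetilde{A}_3$ to weak modularity of its apartments, where Theorem \ref{wmcomplexes} already applies. The triangle and quadrangle conditions comprising weak modularity are local statements, concerning configurations of three or four vertices at prescribed distances from a basepoint $u$ and requiring a ``helper'' vertex at a specified distance. The overall strategy is to place each such configuration in a single apartment $A \subset \Delta$, invoke Theorem \ref{wmcomplexes} inside $A$ to produce the helper vertex, and verify that it serves equally well in $\Delta$.

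The crucial preliminary is that every apartment is isometrically embedded in the $1$-skeleton of $\Delta$. For an apartment $A$ and chamber $C \subset A$, the retraction $\rho_{A,C} \colon \Delta \to A$ is simplicial, fixes $A$ pointwise, preserves gallery distance from $C$, and is distance-nonincreasing on the $1$-skeleton. Given $x, y \in A$, any edge-path in $\Delta$ from $x$ to $y$ retracts to a walk in $A$ of no greater length, giving $d_A(x,y) = d_\Delta(x,y)$. Combined with the combinatorial description of edges furnished by Theorem \ref{1skeleton}, this reduces each local check to one performed inside a Coxeter complex. The triangle condition is then immediate: for $u$ and an edge $\{v,w\}$ with $d(u,v) = d(u,w)$, the building axiom that any two simplices (here $\{u\}$ and $\{v,w\}$) lie in a common apartment supplies an $A$ containing all three vertices, and Theorem \ref{wmcomplexes} produces the required helper vertex inside $A$.

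The quadrangle condition involves more vertices than a single axiom places into one apartment, so I would fix an apartment $A$ containing $u$ together with one of the involved edges, take a chamber $C \subset A$ containing $u$, and use $\rho_{A,C}$ to transport the remaining vertices into $A$. Adjacency relations survive the retraction because $\rho_{A,C}$ is simplicial; what must still be checked is that the prescribed distance equalities from $u$ are not violated. Distance non-expansion is automatic, but to be sure the retracted configuration witnesses the same quadrangle instance one needs $\rho_{A,C}$ to preserve distances from $u$ on the retracted vertices. This is the main obstacle, and it is where the rank-$3$ structure of $\widetilde{A}_3$ should enter: the link of $u$ is a spherical building of type $A_3$ whose $1$-skeleton is well understood, allowing a link-level verification that geodesics from $u$ into the relevant neighborhood can be chosen inside $A$. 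Once this is in place, the helper vertex produced by Theorem \ref{wmcomplexes} in $A$ satisfies the distance requirements in $\Delta$, completing the argument.
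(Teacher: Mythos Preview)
Your handling of the triangle condition is correct and matches the paper's own argument: place $u$ and the edge $\{v,w\}$ in a common apartment, use that apartments are isometrically embedded in the $1$-skeleton (your retraction argument for this is fine), and invoke Theorem~\ref{wmcomplexes}.

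The quadrangle argument, however, has a genuine gap that your outline does not close. Suppose you fix an apartment $A$ containing $u$ and the edge $\{s,v\}$, and retract $w$ to $w' = \rho_{A,C}(w)$. Even granting that $d(u,w') = d(u,w)$ --- which you correctly flag as nontrivial --- the helper vertex $t$ produced by Theorem~\ref{wmcomplexes} inside $A$ is adjacent to $v$ and to $w'$, \emph{not} to the original $w$. Nothing in the retraction machinery promotes adjacency with $w' \in A$ to adjacency with $w \in \Delta$; the retraction is many-to-one, and $w' \ne w$ in general. So the $t$ you build does not witness the quadrangle condition for the original configuration. Your final paragraph asserts that ``the helper vertex produced \ldots\ satisfies the distance requirements in $\Delta$,'' but distance from $u$ is not the issue --- adjacency to $w$ is, and this is never addressed. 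If the retraction approach went through this smoothly it would work uniformly in all ranks, which would be surprising given that the paper can only conjecture the $\widetilde{A}_n$ case.

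For comparison, the paper's proof does not try to retract the configuration into an apartment. It first invokes the local-to-global Theorem~\ref{localglobal} to reduce to vertices at distance $2$ and $3$ from the basepoint, and then uses the $\mathrm{CAT}(0)$ geometry of the building: it analyzes the link of one of the distance-$2$ vertices, computes dihedral angles, and shows that the $\mathrm{CAT}(0)$ geodesic between two relevant vertices generically passes through the interior of a star, forcing any apartment containing them to contain the whole configuration. When this fails, the vertex types are forced into a specific alternating pattern, and a further argument using Lemma~\ref{squarelemma} produces a locally geodesic loop of length $< 2\pi$ in a $\mathrm{CAT}(1)$ link, a contradiction. The rank-$3$ combinatorics of $A_3$ links is used in an essential way at this last step.
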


We believe that this result is true for all $n$, but it is unclear how the given proof generalizes or if different techniques are required. 

\begin{conjecture}
The 1-skeleta of buildings of type $\widetilde{A}_n$ are weakly modular.
\end{conjecture}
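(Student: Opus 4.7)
The plan is to lift both local axioms of weak modularity from apartments, where they hold by Theorem B, to the full building $\Delta$ of arbitrary type $\widetilde{A}_n$. The structural tools needed are standard for buildings: (i) any two simplices lie in a common apartment, and (ii) for each vertex $u$ in an apartment $A$ there is a retraction $\rho_A \colon \Delta \to A$ fixing $A$ pointwise, preserving $d(u, \cdot)$, and sending edges to edges or vertices. Together they imply that apartments are isometrically embedded subgraphs of the 1-skeleton, since retracting a geodesic path in $\Delta$ gives a path in $A$ of the same length.

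The triangle condition follows immediately. For $u, v, w$ with $v \sim w$ and $d(u, v) = d(u, w) = k$, the simplices $\{u\}$ and $\{v, w\}$ lie in a common apartment $A$ by (i); Theorem B in $A$ produces a common neighbor $\bar x$ of $v, w$ with $d_A(u, \bar x) = k - 1$, and by isometric embedding this is also a valid common neighbor in $\Delta$ at the right distance.

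For the quadrangle condition, suppose $u, v, w, z$ satisfy $v \sim z$, $w \sim z$, $v \not\sim w$, and $d(u, v) = d(u, w) = d(u, z) - 1 = k$. The edges $\{v, z\}$ and $\{w, z\}$ meet only at $z$ and need not span a simplex, so the four vertices may fail to lie in any one apartment. I would pick an apartment $A$ containing $u$ and the edge $\{v, z\}$, retract via $\rho = \rho_A$, and apply the quadrangle condition of Theorem B in $A$ to $u, v, \rho(w), z$, producing a candidate $\bar x \in A$ with $\bar x \sim v$, $\bar x \sim \rho(w)$, and $d(u, \bar x) = k - 1$. Dually, a second apartment $A'$ containing $u$ and $\{w, z\}$ yields a candidate $\bar x'$ adjacent to $w$. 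The task is then to lift $\bar x$, or reconcile it with $\bar x'$, to a single vertex $x \in \Delta$ adjacent to both $v$ and $w$ and at distance $k - 1$ from $u$.

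The main obstacle is precisely this lifting step. The fiber $\rho^{-1}(\bar x)$ need not contain a neighbor of $w$, because the edge $\{\bar x, \rho(w)\}$ of $A$ may fail to lift to an edge of $\Delta$ incident to $w$. Overcoming this in full generality requires a detailed understanding of the spherical link $\mathrm{Lk}(z)$, which is a building of type $A_n$, together with an analysis of how apartments through $z$ interact with those through $u$. A promising route is induction on $n$: formulate and prove a corresponding ``quadrangle-lifting'' statement for the 1-skeleta of $A_n$-type spherical buildings, whose own vertex links are $A_{n-1}$-buildings, and apply it inside $\mathrm{Lk}(z)$. Identifying the correct auxiliary statement that both suffices for the lift and admits an inductive proof, while controlling the new degeneracies (such as $\rho(w) = v$ or $\rho(w) = z$) that are avoidable in the rank-$3$ analysis of Theorem C, is the heart of the conjecture.
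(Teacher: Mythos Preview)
This statement is a \emph{conjecture} in the paper; the authors do not prove it, and explicitly say it is unclear whether the $n=3$ argument generalizes. So there is no paper proof to compare against, only the special case of Theorem~C.

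Your triangle argument is correct and is exactly what the paper does for $n=3$: the edge $\{v,w\}$ and the vertex $u$ are two simplices, hence lie in a common apartment, and Theorem~B finishes.

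For the quadrangle condition, you correctly identify the core difficulty: the two edges $\{v,z\}$ and $\{w,z\}$ need not lie in a common apartment with $u$, and retracting to an apartment through one of them gives only a candidate adjacent to the retracted image of $w$, not to $w$ itself. You are honest that the lifting step is the open problem. It is worth knowing, however, that the paper's $n=3$ proof does \emph{not} proceed via retractions at all. Instead it uses the $\mathrm{CAT}(0)$ metric on the building: one analyzes the dihedral angles in the spherical link $\mathrm{lk}(d)$ (an $A_3$ Coxeter complex) to show that, unless $b$ and $f$ sit in a very specific position (opposite a common $xw$ edge), the $\mathrm{CAT}(0)$ geodesic from $b$ to $f$ passes through the interior of $\mathrm{St}(d)$, forcing any apartment containing $b$ and $f$ to also contain $d$. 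One then reduces to a rigid hexagonal configuration with alternating $y$/$z$ types, fills it with three $4$-cycles via Lemma~\ref{squarelemma}, and derives a contradiction by exhibiting a locally geodesic loop of length $<2\pi$ in the $\mathrm{CAT}(1)$ link of the central vertex.

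That argument is highly type-specific and leans on the small, explicit geometry of the $A_3$ link; this is precisely why the paper stops at $n=3$. Your proposed retraction-and-induction scheme is a reasonable alternative line of attack, but as you note, isolating the right inductive statement for $A_n$-links is the whole problem, and nothing in the paper suggests how to do it.
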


\smallskip
\noindent \textbf{Acknowledgements.}
I would like to thank my advisers Piotr Przytycki and Marcin Sabok for their generosity and patience in meeting with me over the duration of this work. This article would not exist without their support and guidance. I would also like to thank my parents, Rebecca and Eric, and my brother, Lucas, for their encouragement of my academic pursuits. I am lucky to have you all as family. 

\section{Definitions}
\subsection{Coxeter Complexes and Buildings}
\begin{definition}
A \textit{Coxeter group} $W$ is a group having a presentation of the form\linebreak $\langle s_1,\ldots,s_n|s_i^2, (s_is_j)^{m_{ij}}\ \forall i\neq j \rangle$, where $m_{ij}=m_{ji}\in\mathbb{N}_{\geq 2}\cup \{\infty\}$, and $m_{ij}=\infty$ means we omit a relation for the order of $s_is_j$. 
\end{definition}
Suppose we have a Coxeter group $W$. There are several ways to encapsulate the presentation for a Coxeter group. One way is via an $n\times n$ matrix with entries $(m_{ij})$, where $m_{ii}=1\ \forall i$, called the \textit{Coxeter matrix} of the group $W$. Another way is via a complete edge-labeled graph, with vertex set the generators $\{s_1,\ldots,s_n\}$ and edges labeled by $m_{ij}$ with endpoints $s_i$, $s_j$. This graph is called the \textit{Coxeter diagram} of the group $W$. It is a convention that edges corresponding to $m_{ij}=2$ are omitted and edges corresponding to $m_{ij}=3$ are drawn unlabeled. We can now introduce our central object of study. 

\begin{definition}
For $n\geq 3$, the $\widetilde{A}_n$ Coxeter group is the Coxeter group with Coxeter diagram an unlabeled $(n+1)$-cycle. 
\end{definition}
The presentation which exhibits a group $W$ as a Coxeter group is not necessarily unique. 
Thus, it is helpful to fix a particular presentation of the group, which is why one often deals with Coxeter systems.

\begin{definition}
A \textit{Coxeter system} is a pair $(W,S)$ where $W$ is a Coxeter group and $S$ is a generating set of $W$ coming from some presentation exhibiting $W$ as a Coxeter group.
\end{definition}
Since Coxeter groups are meant to generalize discrete reflection groups, it should not be surprising that it is possible to ``artificially'' construct a space on which a given Coxeter group acts by ``reflections'', in an appropriate sense. 

\begin{definition}
The \textit{Coxeter complex} $\Sigma(W,S)$ associated to a Coxeter system $(W,S)$ is an $(|S|-1)$-dimensional simplicial complex obtained by identifying pairs of faces in a disjoint collection of $(|S|-1)$-simplices. For each $w\in W$, there is an associated $(|S|-1)$-simplex with vertices labeled by $S$ called a \textit{chamber}. We glue simplices $w$ and $w'$ along their faces opposite to vertex $s\in S$ (respecting the labels on the identified faces) if and only if $w=w's$. 
\end{definition}

Notice the vertices of each simplex are labeled by the generators $S$ and all identifications of faces respect the labels. Thus, vertices $v$ in a Coxeter complex have a well-defined label in $S$, called the \textit{type} of $v$, induced by the labeling on the vertices of the simplices. We will sometimes refer to a vertex as an ``$s$ vertex'' to mean a vertex of type $s$. 

There is a nice relationship between links in a Coxeter complex and the Coxeter diagram. Let $D$ be the Coxeter diagram of a group $(W,S)$, so that vertices of $D$ can be identified with~$S$, and let $v\in \Sigma(W,S)$ be a vertex with label $s$. An analysis of the Tits representation of a Coxeter group, e.g.\ in \cite{ronan}, shows that the generators $S-s$ generate (in $W$) a Coxeter group with diagram $D-s$, as one might expect. Thus the link $lk(v)$ is naturally identified with the Coxeter complex associated with $D-s$. In particular, the link of a vertex in the $\widetilde{A}_n$ Coxeter complex is isomorphic to the $A_n$ Coxeter complex. 

\begin{definition}
For $n\geq 2$, the $A_n$ Coxeter group is the Coxeter group with Coxeter diagram a path with $n$ vertices. 
\end{definition}
Coxeter groups can be arranged into larger simplicial complexes called buildings, the theory of which is largely due to Jacques Tits. Ultimately, we will extend our proof of weak modularity to certain buildings.

\begin{definition}
A \textit{building} $\mathcal{B}$ with $\Sigma$ apartments is a simplicial complex which is the union of subcomplexes $\mathcal{B}=\bigcup_iA_i$ called \textit{apartments}, each isomorphic to the Coxeter complex $\Sigma$ such that: 
\begin{enumerate}
    \item Any two simplices are contained in an apartment.
    \item For any two apartments $A_i$, $A_j$ there exists an isomorphism $A_i\to A_j$ fixing $A_i\cap A_j$, where the intersection can potentially be empty.
\end{enumerate} 
\end{definition}
If $\Sigma$ is the Coxeter complex of a Coxeter group $W$, then we call $\mathcal{B}$ a \textit{building of type $W$}. Though the general construction of a Coxeter complex is combinatorial, there are subclasses of Coxeter groups whose associated Coxeter complex embeds in Euclidean space, in hyperbolic space, or in a sphere.

Discussion in \cite[Ch 3.1]{brown} shows how the Coxeter complex of $A_n$ can be identified with a sphere. Similarly, discussion in \cite[Ch 10.2.1]{brown} shows how $\widetilde{A}_n$ can be given a Euclidean space structure. That is, the Coxeter complexes can be thought of as simplicial decompositions of spheres and Euclidean spaces so that the generators of the respective Coxeter groups act by reflections through the faces of some given simplex. Reflections through adjacent faces in the given simplex have an order prescribed in the Coxeter group presentation, and from this we can deduce the dihedral angle between two faces. 

Given any two points in a building $x,y\in \mathcal{B}$, there is an apartment $A$ containing $x$ and $y$. If we define $d(x,y)$ to be the distance between $x$ and $y$ in $A$, then we get a well-defined metric on $\mathcal{B}$ which is $\mathrm{CAT}(0)$ (resp.\ $\mathrm{CAT}(1)$) if the apartments are isometric to Euclidean space (resp. some sphere) \cite[Thm 10A.4]{bridson}. With this metric on $\mathcal{B}$, apartments are convex \cite[Thm 10A.5]{bridson}. Note also that the link of a vertex in a building $\mathcal{B}$ of type $\widetilde{A}_n$ is a building of type $A_n$. The $\mathrm{CAT}(1)$ metric that links inherit from $\mathcal{B}$ as a $\mathrm{CAT}(0)$ space is the same as the $\mathrm{CAT}(1)$ metric they inherit as buildings of type $\mathrm{CAT}(0)$, so no distinction is necessary.

\subsection{Weak Modularity}\hfill\\
We now introduce a nonpostive curvature property of graphs which generalizes the structure of 1-skeleta of $\mathrm{CAT}(0)$ cube complexes and systolic complexes. 
\begin{definition}
\label{weakly modular}
A graph $G=(V,E)$ is said to be \textit{weakly modular} if it satisfies the following two conditions for any vertex $v\in V$. We let $d$ denote the path metric on $G$. 
\begin{enumerate}
    \item The \textit{triangle condition}: if $u,v,w\in V$ such that $(u,w)\in E$ and $d(u,v)=n=d(w,v)$, then there exists a vertex $t\in V$ such that $(u,t),(w,t)\in E$ and $d(t,v)=n-1$
    \item The \textit{quadrangle condition}: if $u,v,w,s\in V$ such that $(s,u),(s,w)\in E$, $d(u,v)=d(w,v)=n$ and $d(s,v)=n+1$, then there exists a vertex $t\in V$ such that $(u,t),(w,t)\in E$ and $d(t,v)=n-1$
\end{enumerate}

\end{definition}
A graph is $\textit{locally weakly modular}$ if it satisfies the triangle property for edges $uw$ distance two from $v$ in definition \ref{weakly modular}.i and the quadrangle property for vertices $s$ at distance three from $v$ in definition \ref{weakly modular}.ii. The triangle-square completion $X_{\triangle \square}(G)$ of a graph $G$ is the 2-complex with 1-skeleton $G$ and with 2-cells attached to every 3-cycle and 4-cycle. Weak modularity is studied in \cite{weakmodularity}. In particular, the authors prove the following local-to-global theorem.  

\begin{theorem}[Local-to-Global {\cite[Thm 3.1]{weakmodularity}}]
\label{localglobal}
A graph $G$ is a weakly modular graph if and only if $G$ is a locally weakly modular graph whose triangle-square complex $X_{\triangle \square}(G)$ is simply connected.
\end{theorem}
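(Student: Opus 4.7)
The plan is to apply Theorem \ref{localglobal}, reducing the goal to (i) simple connectedness of the triangle-square complex $X_{\triangle\square}(\mathcal{B}^{(1)})$ and (ii) local weak modularity of $\mathcal{B}^{(1)}$, where $\mathcal{B}$ is a building of type $\widetilde{A}_3$.

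Part (i) is essentially free from the CAT(0) structure: since apartments are Euclidean, $\mathcal{B}$ is CAT(0), hence contractible, and so its 2-skeleton $\mathcal{B}^{(2)}$ is simply connected. Every 2-simplex of $\mathcal{B}$ is a triangle in the 1-skeleton, so $\mathcal{B}^{(2)}$ is a subcomplex of $X_{\triangle\square}(\mathcal{B}^{(1)})$ sharing the same 1-skeleton; any loop in $X_{\triangle\square}(\mathcal{B}^{(1)})$ therefore bounds a disk already in $\mathcal{B}^{(2)} \subseteq X_{\triangle\square}(\mathcal{B}^{(1)})$.

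For (ii), the strategy is to place each local configuration inside a single apartment and apply Theorem \ref{wmcomplexes}. A preliminary observation is that apartments are isometrically embedded in $\mathcal{B}^{(1)}$ as graphs: the standard retraction $\rho_{A,C}: \mathcal{B} \to A$ based at a chamber $C \subseteq A$ is simplicial, hence does not increase path length in the 1-skeleton, and it fixes $A$ pointwise. For the triangle condition, the edge $uw$ lies in some chamber $C$, and the building axioms provide an apartment containing both $C$ and any fixed chamber through $v$; Theorem \ref{wmcomplexes} then furnishes the required common neighbor $t$ with $d(t,v)=1$.

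The main obstacle is the quadrangle condition, because the two edges $su$ and $sw$ need not lie in a common chamber, so $\{s,u,w\}$ is a priori not simplicial. To place $\{s,u,w\}$ together with a chamber of $v$ in a common apartment, I would work in the spherical link $lk(s)$ of type $A_3$: any two vertices of an $A_3$ building lie in a common spherical apartment, and the pair $(u,w)$ can be placed in one such apartment compatibly with the CAT(0) geodesic direction from $s$ toward $v$. Lifting this spherical apartment and extending via a gallery toward a chamber containing $v$ yields a Euclidean apartment of $\mathcal{B}$ containing all of $v,s,u,w$, at which point Theorem \ref{wmcomplexes} closes the argument. The delicate point is this compatible lifting, and I expect it to be where the restriction $n=3$ genuinely enters: the $A_3$ link is small enough for a direct case analysis using the explicit coordinates from Theorem \ref{1skeleton}, whereas higher rank makes the lifting step substantially more intricate, matching the author's remark that the general $\widetilde{A}_n$ case requires new methods.
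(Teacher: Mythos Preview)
You are not proving the stated theorem. Theorem~\ref{localglobal} is the Local-to-Global result quoted from \cite{weakmodularity}; the paper does not prove it, and your proposal makes no attempt to prove it either. What you have written is a sketch of Theorem~\ref{wmbuildings} (weak modularity of $\widetilde{A}_3$ buildings) that \emph{uses} Theorem~\ref{localglobal} as a black box. So as a proof of the statement in question, the proposal is simply off target.

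Read instead as an outline for Theorem~\ref{wmbuildings}, your reduction via Theorem~\ref{localglobal}, the simple-connectedness argument, and the triangle case all match the paper. The quadrangle case, however, has a genuine gap. Your plan is to place all of $v,s,u,w$ in a single apartment by choosing a spherical apartment in $lk(s)$ containing $u$ and $w$ ``compatibly with the CAT(0) geodesic direction toward $v$'' and then lifting. This compatible lift is the entire difficulty and is not established: the building axioms give an apartment through any two simplices, not through three vertices in general position, and nothing you wrote forces the lift of a spherical apartment around $s$ to extend to a Euclidean apartment that also contains a chamber at $v$. The paper in fact never finds such a common apartment.

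The paper's route is different. It works in $lk(d)$ with the explicit dihedral angles of $\widetilde{A}_3$: unless $b$ and $f$ sit opposite a common $xw$ edge, the CAT(0) geodesic from $b$ to $f$ meets the interior of $St(d)$, so convexity of apartments drags $d$ into any apartment containing $bc$ and $af$, and Theorem~\ref{wmcomplexes} finishes. The exceptional case forces $b,f$ to have type $y$; a symmetric argument on the other side reduces everything to an alternating $y$--$z$ hexagon. Inverting the diagram either produces the desired neighbour or, via Lemma~\ref{squarelemma}, yields three central $xw$ edges whose midpoints assemble into a locally geodesic loop of length $6\cdot(\pi/4)<2\pi$ in the CAT(1) link of the central $z$ vertex, a contradiction. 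The dependence on $n=3$ enters through these concrete $A_3$ angle computations, not through any apartment-lifting mechanism.
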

In particular, since buildings are simply-connected simplicial complexes, it suffices to prove that the 1-skeleton of a building is locally weakly modular to prove it is weakly modular. 

\section{Constructing $\widetilde{A}_n$-complexes} 
\begin{definition}[\cite{brown}]
A group $W$ of isometries of a finite-dimensional real vector space $V$ is an \textit{affine reflection group} if there is a set of hyperplanes $\mathcal{H}$ such that
\begin{enumerate}
    \item $W$ is generated by the reflections $\{s_H :H\in\mathcal{H}\}$ where $s_H$ is the orthogonal reflection through the hyperplane $H$
    \item $\mathcal{H}$ is $W$-invariant
    \item $\mathcal{H}$ is locally finite
\end{enumerate}
\end{definition}
Recall that a hyperplane $H\subset V$ is the set of points $x\in V$ which satisfy an equation of the form $f(x)=c$, where $f\in V^*$ and $c\in \mathbb{R}$. Equivalently $H=S+x$ where $S\subset V$ is an $(n-1)$-dimensional subspace and $x\in V$. $f$ is the dual of a normal vector of $S$, and the sets of points which satisfy $f>c$ and $f<c$ are exactly the two disjoint open half-spaces in $V-H$.

We select for each $H\in \mathcal{H}$ a defining equation $f_H=c_H$. Our hyperplanes $\mathcal{H}$ partition our vector space $V$ into sets called $\textit{cells}$, where a cell $A\subset V$ is a nonempty set defined by satisfying one of $f_H=c_H$, $f_H>c_H$, or $f_H<c_H$ for each $H\in\mathcal{H}$. Each cell $A$ can be identified by a sequence $\sigma(A)_H\in \{-,0,+\}$ indexed by $\mathcal{H}$, where $\sigma(A)_H$ being $-$, $0$, or $+$ corresponds to $A$ satisfying $f_H<c_H$, $f_H=c_H$, or $f_H>c_H$, respectively. With such a sequence associated to each cell, we can define a partial order by declaring $B\leq A$ if and only if $\forall H\in\mathcal{H}$ either $\sigma_H(B)=0$ or $\sigma_H(B)=\sigma_H(A)$. Equivalently, $B\leq A$ if and only if $B\subset \overline{A}$, the closure of $A$. In case $B\leq A$ we say $B$ is a $\textit{face}$ of $A$. With its poset structure, we denote the set of cells by $\Sigma(\mathcal{H})$. 

From a poset $\Sigma({\mathcal{H}})$ we can construct a simplicial complex as follows. The vertices of the complex are in 1-1 correspondence with the lowest dimensional faces of $\Sigma(\mathcal{H})$. Then, a collection of vertices span a simplex if and only if the corresponding faces have a common greater element in $\Sigma(\mathcal{H})$. This simplicial complex is denoted $\Sigma(W,V)$, for which we have the following result. 

\begin{theorem}[\cite{brown}] 
\label{construction}
Let $W$ be an affine reflection group with associated hyperplanes $\mathcal{H}$ in an $n$-dimensional real vector space $V$. Let $C$ be an $n$-dimensional cell and let $S$ be the set of reflections through the $(n-1)$-dimensional faces of $C$. Then $(W,S)$ is a Coxeter system and $\Sigma(W,V)\cong \Sigma(W,S)$.  
\end{theorem}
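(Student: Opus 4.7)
The plan is to prove Theorem \ref{construction} in three stages: show that $W$ acts simply transitively on the chambers (the $n$-dimensional cells of $\Sigma(\mathcal{H})$); deduce that $(W,S)$ admits the Coxeter presentation with orders $m_{s,s'} := |ss'|$; and identify the cell poset $\Sigma(\mathcal{H})$ with the parabolic coset poset of $(W,S)$, yielding a simplicial isomorphism $\Sigma(W,V) \cong \Sigma(W,S)$.

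For the first stage, I would declare two chambers adjacent if their sign sequences differ in exactly one coordinate $H \in \mathcal{H}$; such a pair is swapped by the reflection $s_H$. Local finiteness of $\mathcal{H}$ together with the connectedness of $V$ implies that the chamber adjacency graph is connected, because a generic straight segment between interior points of two chambers crosses only finitely many hyperplanes and thereby records a gallery. Reflecting through successive walls of a gallery out of $C$ then shows that every reflection $s_H \in W$ is conjugate to some element of $S$ by a product of generators from $S$; since $W$ is generated by reflections, $\langle S \rangle = W$ and the action on chambers is transitive. Freeness is automatic because a nontrivial Euclidean isometry cannot fix the open chamber $C$ pointwise.

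The second stage is the crux. For distinct $s, s' \in S$ the subgroup $\langle s, s' \rangle$ is a discrete dihedral group acting on the plane orthogonal to the codimension-$2$ face $F_{\{s,s'\}} \leq C$, so $m_{s,s'}$ is finite. Let $W'$ be the abstract group with presentation $\langle S \mid s^2,\ (ss')^{m_{s,s'}} \rangle$; the first stage gives a surjection $W' \to W$ and one must prove injectivity. The standard route is to define the geometric length $\ell(w) := \#\{H \in \mathcal{H} : H \text{ separates } C \text{ from } wC\}$, show it equals the combinatorial word length in $(W,S)$ by comparing galleries with words in $S$, and extract the exchange condition from a geometric gallery-shortening argument (when two chambers in a gallery lie on the same side of a hyperplane they both cross, one can excise a folded subgallery). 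Tits's theorem that the exchange condition characterizes Coxeter groups then forces $W' \to W$ to be injective. This is the main obstacle, since stages one and three are essentially bookkeeping once galleries and walls are in place, whereas upgrading simple transitivity on chambers to a Coxeter presentation is the genuine content of Tits's theorem.

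For the third stage, the faces of $C$ correspond bijectively to the subsets $T \subseteq S$ whose walls have a common intersection inside $C$, with $F_T$ that intersection and $W_T = \langle T \rangle$ the pointwise stabilizer of $F_T$ in $W$. By the simply transitive chamber action, every face of every chamber is uniquely of the form $wF_T$ for a coset $wW_T \in W/W_T$, and the face relation $F \leq F'$ corresponds to the reverse inclusion $wW_T \supseteq w'W_{T'}$ (equivalently $T \supseteq T'$ after moving representatives into $C$). This matches the coset description of the abstract Coxeter complex $\Sigma(W,S)$, and propagates to a simplicial isomorphism $\Sigma(W,V) \cong \Sigma(W,S)$ that is bijective on minimal faces and preserves all incidence data.
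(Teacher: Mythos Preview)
The paper does not supply its own proof of this theorem; it is stated with a citation to Abramenko--Brown and then used as a black box in the subsequent construction of the $\widetilde{A}_n$ $1$-skeleton. So there is no in-paper argument to compare against.

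That said, your outline is the standard route taken in the cited reference (and in Bourbaki and Humphreys): gallery-connectedness of chambers from local finiteness, generation of $W$ by the wall reflections of a fixed chamber, the identification of word length with the number of separating walls leading to the exchange condition and hence the Coxeter presentation, and finally the bijection between faces of $\Sigma(\mathcal{H})$ and standard parabolic cosets. One small point to tighten in your Stage~1: the sentence ``freeness is automatic because a nontrivial Euclidean isometry cannot fix the open chamber $C$ pointwise'' only shows that the \emph{pointwise} stabilizer of $C$ is trivial; you still need that $wC = C$ setwise forces $w$ to fix $C$ pointwise. The clean way to get this is actually from Stage~2: once you know that word length equals the number of hyperplanes separating $C$ from $wC$, then $wC = C$ gives length zero and hence $w = 1$. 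So simple transitivity is more naturally a corollary of the length formula rather than a precursor to it, and you may want to reorder accordingly.
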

We aim to study the 1-skeleton of the Coxeter complex of $\widetilde{A}_n$. Our approach first involves embedding the 1-skeleton in Euclidean space $\mathbb{R}^{n+1}$ by realizing $\widetilde{A}_n$ as a Euclidean reflection group. From this embedding we obtain a particular description of the 1-skeleton, from which a suitable analysis allows us to prove weak modularity. 

To realize the 1-skeleton of a $\widetilde{A}_n$-Coxeter complex inside Euclidean space, we make use of Theorem $\ref{construction}$ on Euclidean reflection groups. We proceed by giving a detailed construction of $\widetilde{A}_2$, which we then generalize to the construction of all $\widetilde{A}_n$. It was pointed out to the author by Jon McCammond that our description of the 1-skeleta of the $\widetilde{A}_n$ Coxeter complexes can be found without proof in Conway and Sloane's text~\cite{conway}. 

\begin{example}[Constructing the $\widetilde{A}_2$ 1-skeleton]
Inside $\mathbb{R}^3$ with coordinates $(x,y,z)$ we consider hyperplanes $\mathcal{H}$ of the form $\{x-y=c\}$, $\{y-z=c\}$, and $\{x-z=c\}$ for every $c\in 3\mathbb{Z}$. 

\textbf{Step 1.} \textit{The group $W$ generated by reflections through hyperplanes in $\mathcal{H}$ is an affine reflection group, and $W\cong \widetilde{A}_2$}. 

The reflection through a hyperplane of the form $H=\{x-y=c\}$ is given by $s_H(x,y,z)=(y+c,x-c,z)$, with similar formulas for reflecting through other hyperplanes. Such a reflection sends the collection of hyperplanes $\mathcal{H}$ to itself, and thus $\mathcal{H}$ is $W$-invariant, since $W$ is generated by these reflections. Next, note that the ball of radius $1/2$ centered at any point $(x,y,z)\in\mathbb{R}^3$ can intersect at most three hyperplanes in $\mathcal{H}$. Indeed, if we have two close points $d((x_1,y_1,z_1),(x_2,y_2,z_2))<1$ and the first point lies on a hyperplane, for example $x_1-y_1=c\in 3\mathbb{Z}$, then since $|(x_1-y_1)-(x_2-y_2)|\leq |x_1-y_1|+|x_2-y_2|\leq 2$ it is impossible that the second point lies on a different, parallel hyperplane, i.e. it is impossible that $x_2-y_2\in 3\mathbb{Z}$ but $x_2-y_2\neq c$. In particular, since a ball of radius $1/2$ has diameter 1, it can intersect at most one hyperplane in each parallelism class. Thus our collection of hyperplanes $\mathcal{H}$ is locally finite and $W$ is an affine reflection group. 

Next, we show that $W\cong \widetilde{A}_2$. We claim the set $\{x<y<z<x+3\}\subset \mathbb{R}^3$ is a 3-dimensional cell. Indeed, the hyperplanes $\mathcal{H}$ are divided into three parallelism classes, $x-y=3n$, $x-z=3n$, and $y-z=3n$ for $n\in \mathbb{Z}$. Our cell can be defined by the inequalities $x<y<x+3$, $x<z<x+3$ and $y<z<y+3$, which places it in between adjacent pairs of hyperplanes in each of the parallelism classes. Thus $\{x<y<z<x+3\}$ is a 3-dimensional cell. Let $s_{H_i}$ denote the reflection across $H_i$ where $H_1=\{x-y=0\}$, $H_2=\{z-x=3\}$, and $H_3=\{y-z=0\}$. One checks that the composition $s_{H_2}s_{H_1}$ is given by $(x,y,z)\mapsto (y,z+3,x-3)$, from which one verifies that $(s_{H_2}s_{H_1})^3=id$. A similar computation shows that $s_{H_i}s_{H_j}$ has order three for each $i\neq j$. By Theorem $\ref{construction}$, $(W,\{s_{H_i}\})$ is a Coxeter system, and the computation we just performed shows $W\cong \widetilde{A}_2$. 

\textbf{Step 2.} \textit{The vertex set is in bijection with those points $(x,y,z)$ such that $x+y+z=0$ and the difference of any two coordinates is in $3\mathbb{Z}$. Two vertices are joined by an edge if and only if their difference is one of $\pm(2,-1,-1)$, $\pm(-1,2,-1)$, or $\pm(-1,-1,2)$.}

Since each hyperplane of $\mathcal{H}$ is orthogonal to the hyperplane $x+y+z=0$, we can restrict our attention to this isometrically embedded copy of $\mathbb{R}^2$ and replace $\mathcal{H}$ with its restriction to $x+y+z=0$. The advantage of this is that finding minimal cells is easier. In particular, if we find a 0-dimensional cell, we know it must be minimal and thus correspond to a vertex.  

It is thus clear that each point $(x,y,z)$ as described above is a minimal cell. It is a cell because can be defined as the intersection of three hyperplanes, one from each parallelism class, and minimality is clear since it is 0-dimensional. Also, if a point $(x,y,z)$ is not of the described form, then there exists a coordinate whose difference with any other coordinate is not in $3\mathbb{Z}$. Say, $x-y,x-z\not\equiv 0\mod3$. Then $(x,y,z)+(\epsilon,-\epsilon/2,-\epsilon/2)$ does not cross any hyperplane for all $\epsilon>0$ sufficiently small. Thus $(x,y,z)$ lies in a higher dimensional cell and is not a vertex.

We show that vertices which differ by $(2,-1,-1)$ are joined by an edge. The other cases follow the same argument. If $(x,y,z)$ is a vertex, then the set of points $\{(x,y,z)+t(2,-1,-1):t\in(0,1)\}$ is a cell, from which it follows these two vertices are joined by an edge. Indeed, if $(x,y,z)$ was defined by $x-y=3n_1$, $x-z=3n_2$, and $y-z=3n_3$, then $\{(x,y,z)+t(2,-1,-1):t\in(0,1)\}$ is defined by $3n_1<x-y<3(n_1+1)$, $3n_2<x-z<3(n_2+1)$, and $y-z=3n_3$, showing it is a cell. Since $(x,y,z)$ and $(x,y,z)+(2,-1,-1)$ are contained in the closure of this 1-dimensional cell, they are joined by an edge in the Coxeter complex. 

Next, we show vertices are joined by an edge only if their difference is of the form described above. Let $(e_1, e_2, e_3)=(x_2,y_2,z_2)-(x_1,y_1,z_1)$ be the difference between two distinct vertices. Note that $(e_1,e_2,e_3)$ must have integer coordinates which sum to zero, and the difference between any two coordinates must lie in $3\mathbb{Z}$. Hence the only way the difference between two vertices is not one of the values described above is if there exist coordinates whose difference is greater than three. Say, $e_1-e_2>3$. But then the line segment joining the two vertices crosses a hyperplane. Indeed, if $x_1-y_1=3n$, then $\{(x_1,y_1,z_1)+t(e_1,e_2,e_3):t\in(0,1)\}$ lies on both sides of the hyperplane $x-y=3(n+1)$. Thus the vertices are not contained in the closure of a higher dimensional cell and are not joined by an edge in the Coxeter complex. 
\end{example}

\begin{proof}[Proof of Theorem \ref{1skeleton}]
The proof of the general case follows the same outline as the above example, with minor modifications.

\textbf{Step 1.} 
The hyperplanes $\mathcal{H}$ in $\mathbb{R}^{n+1}$ are of the form $\{x_i-x_j=c\}$ for all coordinates $i\neq j$ and $c\in (n+1)\mathbb{Z}$. An analogous computation to the $\widetilde{A}_2$ case shows that reflections through any hyperplane preserve the collection of hyperplanes $\mathcal{H}$. Similarly, a ball of radius $1/2$ intersects at most one hyperplane in each parallelism class, of which there are ${n+1 \choose 2}$-many, each parallelism class corresponding to different choices of $i$ and $j$ in $\{x_i-x_j=c\}$.

To verify that the affine reflection group $W$ is really $\widetilde{A}_n$, we first select a top-dimensional cell $\{x_0<x_1<\ldots<x_n<x_0+(n+1)\}$. Verifying that this is a cell is analogous to the $\widetilde{A}_2$ case. The cell has faces contained in the hyperplanes $\{x_i-x_{i+1}<0\}$ for $i=0,\ldots,n-1$ and $\{x_n-x_0<n+1\}$, and for each of these hyperplanes, there are two others whose coordinates overlap. For example, the hyperplane $\{x_n-x_0<n+1\}$ has coordinates overlapping with the hyperplanes $\{x_{n-1}-x_n<0\}$ and $\{x_0-x_1<0\}$. A computation shows that the composition of reflections through hyperplanes with overlapping coordinates has order 3, and reflection through hyperplanes with disjoint coordinates has order 2. By Theorem~$\ref{construction}$, we get $W\cong \widetilde{A}_n$.

As before, we proceed by restricting our attention to the $W$-invariant subspace $\{x_0+\ldots+x_n=0\}$, which is orthogonal to all hyperplanes in $\mathcal{H}$.

\textbf{Step 2.}
\textit{The vertices are exactly those points $(x_0,\ldots,x_n)$ which satisfy $x_0+\ldots+x_n=0$ such that $x_i-x_j\equiv 0\mod (n+1)$ for all coordinates $i$ and $j$. Two vertices are joined by an edge if and only if their difference $(e_0,\ldots,e_n)$ satisfies $e_0+\ldots+e_n=0$, $e_i-e_j\equiv 0 \mod (n+1)\mathbb{Z}$, and $\max_i e_i - \min_je_j = n+1$.}

It should be clear that the claimed vertices are minimal cells, which lie on a hyperplane in each parallelism class of hyperplanes. We only need to show no other points are vertices.

For a point $(x_0,\ldots,x_n)$, we split the set of coordinates $\{0,\ldots,n\}$ into equivalence classes where $i\sim j$ if and only if $x_i-x_j\equiv 0\mod (n+1)$. For a point $(x_0,\ldots,x_n)$ to fail our above conditions, there must be at least two distinct equivalence classes $S$ and $P$. For all $\varepsilon >0$ sufficiently small, if we add $\varepsilon/|S|$ and $-\varepsilon/|P|$ to the coordinates in $S$ and $P$, respectively, then we do not change the hyperplane sign sequence of $(x_0,\ldots,x_n)$, which shows that our point is contained in a higher dimensional cell and is thus not a vertex.

Suppose vertices $x=(x_0,\ldots,x_n)$ and $y=(y_0,\ldots, y_n)$ differ by a vector $(e_0,\ldots,e_n)$ which satisfies $e_0+\ldots+e_n=0$, $e_i-e_j\equiv 0\mod (n+1)$ for all coordinates $i$ and~$j$, and $\max_i e_i-\min_j e_j=n+1$. The conditions on $(e_0,\ldots,e_n)$ imply that there are exactly two coordinate values, one positive and one negative, and the difference of the two is $n+1$. We can modify the defining hyperplane equalities of the vertex $x$ to get a set of defining hyperplane inequalities for $\{x+t(e_0,\ldots,e_n):t\in(0,1)\}$. For those coordinates $i$, $j$ such that $e_i-e_j=0$, we make no change to the hyperplane equality $x_i-x_j=3n$ defining $x$. For those coordinates $i$, $j$ such that $e_i-e_j=n+1$, we change the defining hyperplane equality $x_i-x_j=3n$ of $x$ to an inequality $3n<x_i-x_j<3(n+1)$. As in the $\widetilde{A}_2$ case, these changes give us a defining set of inequalities for $\{x+t(e_0,\ldots,e_n):t\in(0,1)\}$. Thus the open segment $\{x+t(e_0,\ldots,e_n):t\in(0,1)\}$ is a cell whose closure contains $x$, $y$, and so $x$, $y$ are joined by an edge.  

Proving all differences of vertices $(e_0,\ldots,e_n)$ not satisfying the described conditions do not correspond to edges is a direct generalization of the $\widetilde{A}_2$ example. Suppose $(e_0,\ldots,e_n)$ is the difference of vertices $x$ and $y$ not satisfying the above conditions. Then there are coordinates $i$, $j$ such that $e_i-e_j>n+1$, and so the line segment joining $x$, $y$ crosses a hyperplane and $x$, $y$ are not joined by an edge in the Coxeter complex. 
\end{proof}

\section{Weak Modularity of $\widetilde{A}_n$-complexes}

The conditions $e_1+\ldots+e_n=0$, $e_i-e_j\equiv 0\mod (n+1)$, and $\max_i e_i-\min_je_j=n+1$ imply that each edge can only have two distinct coordinate values, one positive and one negative with absolute difference $n+1$. Thus, an edge is fully determined by which coordinates are positive and negative. 
\begin{example}
The edges of $\widetilde A_5$ are of the form $\pm (5,-1,-1,-1,-1,-1)$, \\ $\pm (4,4,-2,-2,-2,-2)$, and $\pm(3,3,3,-3,-3,-3)$, up to permutation of the coordinates. These edges can be determined from just the signs of their coordinates. For example, $(+,-,-,+,-,+)$ corresponds to $(3,-3,-3,3,-3,3)$ and $(-,-,+,+,+,+)$ corresponds to $(-4,-4,2,2,2,2)$.  
\end{example}
Any talk of distance will refer to the path metric on the 1-skeleton of $\widetilde A_n$, and we define the \textit{height} of a vertex to be the distance from the origin. 
\begin{lemma}
The height of a vertex $(x_0,\ldots,x_n)\in \widetilde A_n\subset \mathbb{R}^{n+1}$ is equal to $(\max_i x_i-\min_j x_j)/(n+1)$.
\end{lemma}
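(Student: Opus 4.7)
The plan is to prove both inequalities separately: first that each edge changes the quantity $h(v) := (\max_i x_i - \min_j x_j)/(n+1)$ by at most $1$, giving $d(0,v) \geq h(v)$, and then that a height-decreasing edge always exists, giving $d(0,v) \leq h(v)$ by induction. Note first that $h(v)$ is a nonnegative integer: the congruence condition $x_i \equiv x_j \bmod (n+1)$ forces the numerator to lie in $(n+1)\mathbb{Z}$, and $h(0)=0$.

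For the lower bound, let $e = (e_0,\dots,e_n)$ be any edge vector. Since $\max_i(v+e)_i \leq \max_i v_i + \max_i e_i$ and $\min_j(v+e)_j \geq \min_j v_j + \min_j e_j$, we get
\[
\bigl(\max_i (v+e)_i - \min_j (v+e)_j\bigr) - \bigl(\max_i v_i - \min_j v_j\bigr) \leq \max_i e_i - \min_j e_j = n+1,
\]
so $h(v+e) \leq h(v)+1$. Thus any edge path from $0$ to $v$ must have length at least $h(v)$.

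For the upper bound, I would induct on $h(v)$, with the base case $h(v) = 0$ corresponding to all coordinates equal and hence (using $\sum x_i = 0$) to $v = 0$. For the inductive step, assume $h(v) \geq 1$, set $M = \max_i x_i$, $m = \min_j x_j$, $S = \{i : x_i = M\}$ and $k = |S|$, noting that $1 \leq k \leq n$. Define the candidate edge $e$ by $e_i = n+1-k$ if $i \in S$ and $e_i = -k$ otherwise. A direct check confirms $e$ is a valid edge vector in the sense of Theorem~\ref{1skeleton}: the coordinate sum vanishes, the pairwise differences are multiples of $n+1$, and $\max_i e_i - \min_j e_j = n+1$. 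The key computation is that for $i \notin S$ the congruence $x_i \equiv M \bmod (n+1)$ together with $x_i < M$ forces $x_i \leq M - (n+1)$, so $(v-e)_i = x_i + k \leq M - (n+1) + k$, with equality attained on $S$; hence $\max(v-e) = M - (n+1) + k$. Likewise $\min(v-e) = m + k$, using $h(v) \geq 1$ to ensure the minimum is attained off of $S$. Therefore $h(v-e) = h(v) - 1$, and by induction $d(0, v) \leq d(0, v-e) + 1 \leq h(v)$.

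The only nontrivial step is the choice of the descending edge and the verification that the new max and min behave as claimed; the main subtlety there is that one must exploit the congruence condition (not merely $x_i < M$) to get the sharp estimate $x_i \leq M-(n+1)$ for $i \notin S$, without which $\max(v-e)$ could exceed $M - (n+1) + k$.
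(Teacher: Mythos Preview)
Your proof is correct and follows essentially the same two-step approach as the paper: bound the change in $(\max_i x_i - \min_j x_j)$ along any edge, then exhibit an explicit edge that strictly decreases it. Your descending edge (negative on the maximal coordinates, positive on all others) is a specific instance of the paper's choice, and your write-up is in fact more careful about verifying that the new maximum and minimum land where claimed.
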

\begin{proof}
Travelling along an incident edge to $(x_0,\ldots,x_n)$ changes $(\max_ix_i-\min_jx_j)$ by $-(n+1)$, $+(n+1)$, or 0. It thus remains to show there is always an edge which decreases $(\max_ix_i-\min_jx_j)$. 

Suppose $x_{i_1},\ldots,x_{i_k}$ are all the maximal coordinates and $x_{j_1},\ldots,x_{j_r}$ are all the minimal coordinates of $(x_0,\ldots,x_n)$. Traveling an edge with positive values in the minimal coordinates and negative values in the maximal coordinates decreases $(\max_ix_i-\min_jx_j)$.
\end{proof}

Next, we define a useful bookkeeping tool, which allows us to easily compute height and check adjacency between vertices.

\begin{definition}
A \textit{ladder} is an equivalence class of functions $\{0,\ldots,n\}\to \mathbb{Z}$ identified up to translation (i.e., addition of an integer). The \textit{ladder associated to a vertex} $x=(x_0,\ldots,x_n)$ is the equivalence class of functions containing $L_x:~\{0,\ldots,n\}\to~\mathbb{Z}:i\mapsto (x_i-x_0)/(n+1)$.
\end{definition}

The intuition is that we are placing the coordinates of $(x_0,\ldots,x_n)$ on rungs of an infinite ladder, where stepping up/down a rung corresponds to an increase/decrease of $(n+1)$. We can picture the situation by thinking of the coordinates $0,\ldots,n$ as labeled balls sitting on different levels of a ladder. 

\begin{example}
The vertex $(10,10,-5,-5,-10)\in \widetilde A_4$ has a ladder which can be represented as in Figure \ref{poop}.

\begin{figure}[h]
    \centering
    \includegraphics[height=6cm]{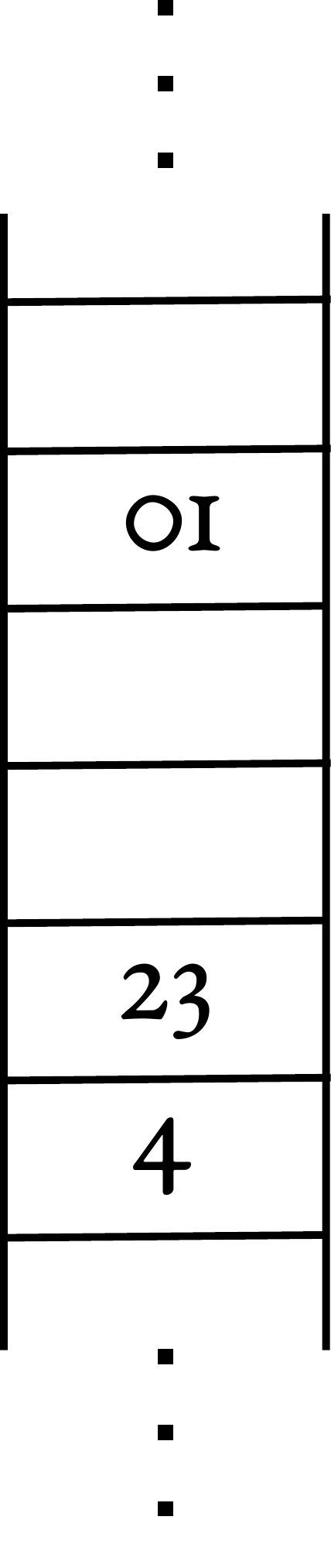}
    \caption{A ladder of a vertex}
    \label{poop}
\end{figure}

Next, we consider how traversing an edge affects the ladder of a vertex. If an edge~$e$ has positive coordinates $i_1,\ldots,i_k$, with remaining coordinates negative, then $L_{x+e}(i_j)=L_X(i_j)+1$ for $j=1,\ldots,k$ and $L_{x+e}(i)=L_x(i)$ for all remaining coordinates $i$. 

In our example above, if we traverse the edge $(4,-2,4,-2,-2)$, then we imagine our ladder changing as indicated in Figure \ref{edge}.
\begin{figure}[h]
\centering
\includegraphics[height=6cm]{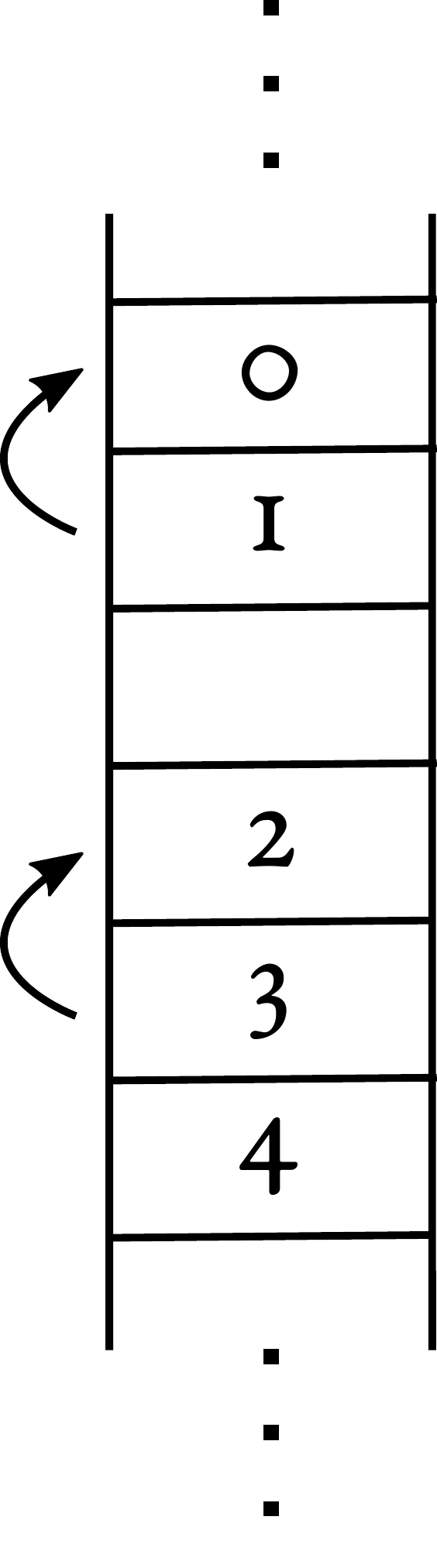}
\caption{Traversing an edge}
\label{edge}
\end{figure}
\end{example}
Thus, traversing any particular edge amounts to selecting a subset $S\subset \{0,\ldots,n\}$ and shifting those coordinates up one rung of the ladder. Also note that the height of a vertex is the distance in the ladder between the highest and lowest coordinates. Thus, traversing an edge decreases height when $S$ contains all balls on the bottom rung and none on the top.
\begin{proof}[Proof of Theorem \ref{wmcomplexes}]
We first prove the triangle property. If $A$ and $B$ are adjacent vertices both of height $n$, then there is some subset $S_{AB}$ of the coordinates so that shifting $S_{AB}$ up one rung moves from $A$ to $B$. That $A$ and $B$ are the same height means that $S_{AB}$ either contains (i) not all of the bottom rung and none of the top rung or (ii) all of the bottom and some of the top rung. After potentially interchanging $A$ and $B$, we can suppose $S_{AB}$ contains not all of the bottom rung and none of the top rung. Let $S_{\text{min}}$ denote the set of coordinates in the bottom rung of $A$ not included in $S_{AB}$, and let $C$ be the vertex reached from $B$ by increasing $S_{\text{min}}$ by a rung -- so $B$ and $C$ are adjacent. Note that $A$ is adjacent to $C$ as well by traveling along the edge $S_{\text{min}}\cup S_{AB}$. Thus $C$ is a height $n-1$ vertex adjacent to $A$ and $B$. 

Next we prove the quadrangle property. Let $X$ be a vertex of height $n+1$ with adjacent vertices $Y$ and $Z$ of height $n$. The subsets of coordinates increased from $X$ to $Y$ and $X$ to $Z$, $S_{XY}$ and $S_{XZ}$, respectively, contain all coordinates on the bottom rung and none of the top rung. Increase the bottom coordinates of the ladder $L_X$ by two rungs and the middle coordinates (those not on top or bottom) by one rung to get a ladder for $W$, a new vertex, pictured in Figure \ref{quadrangle}. Then $W$ has height $n-1$ and both $Y$ and $Z$ are adjacent to $W$.
\end{proof}

\begin{figure}[h]
\centering
\includegraphics[height=7cm]{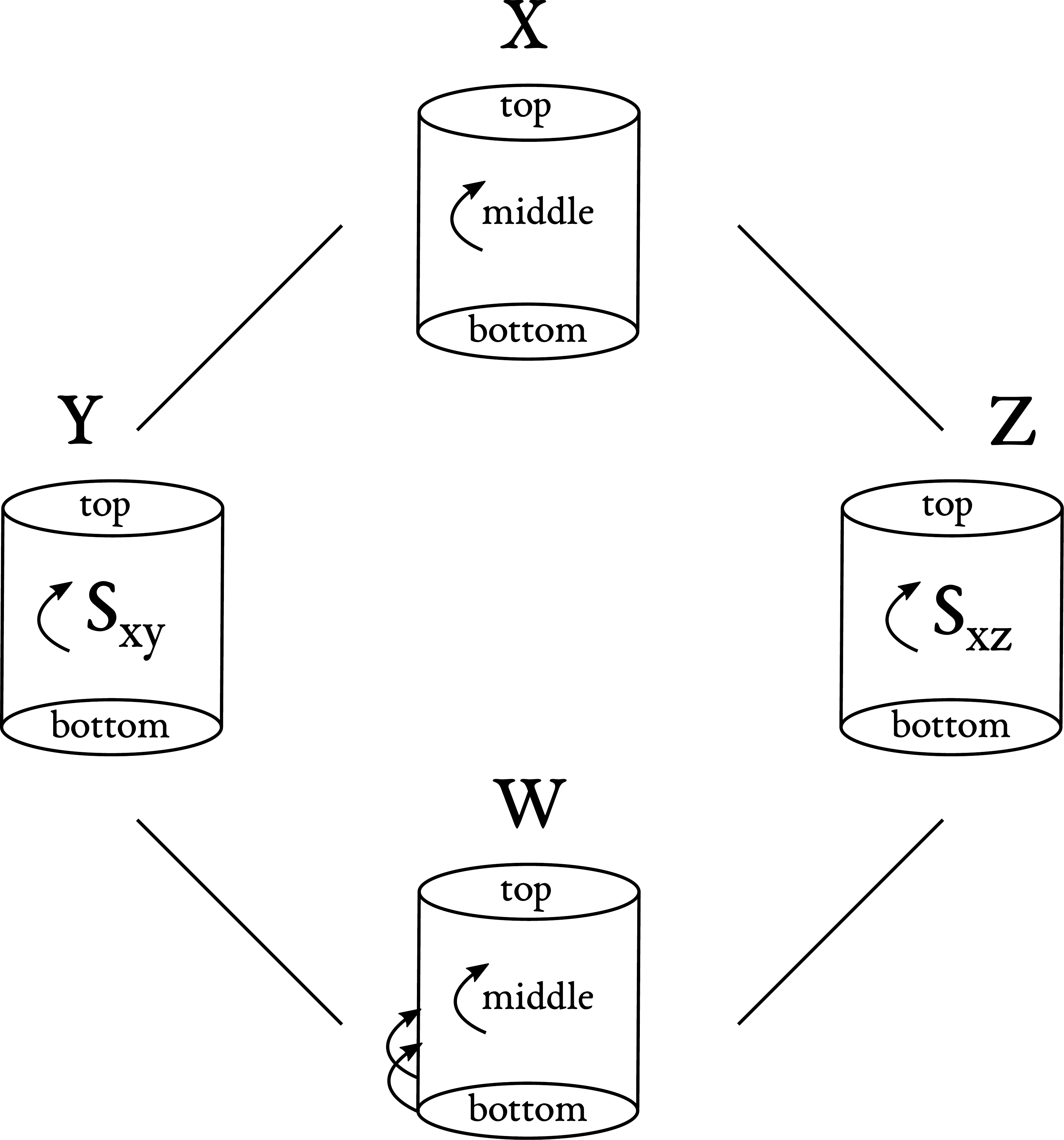}
\caption{The quadrangle property}
\label{quadrangle}
\end{figure}

\section{Weak Modularity of $\widetilde{A}_3$-Buildings}
In this section, we prove that buildings of type $\widetilde{A}_3$ are weakly modular. However, we first prove a lemma needed for the theorem. For the remainder of the section, we fix generators for the Coxeter group $\widetilde{A}_3$ as in Figure \ref{diagram}. 

\begin{figure}[h]
\centering
\includegraphics[height=2cm]{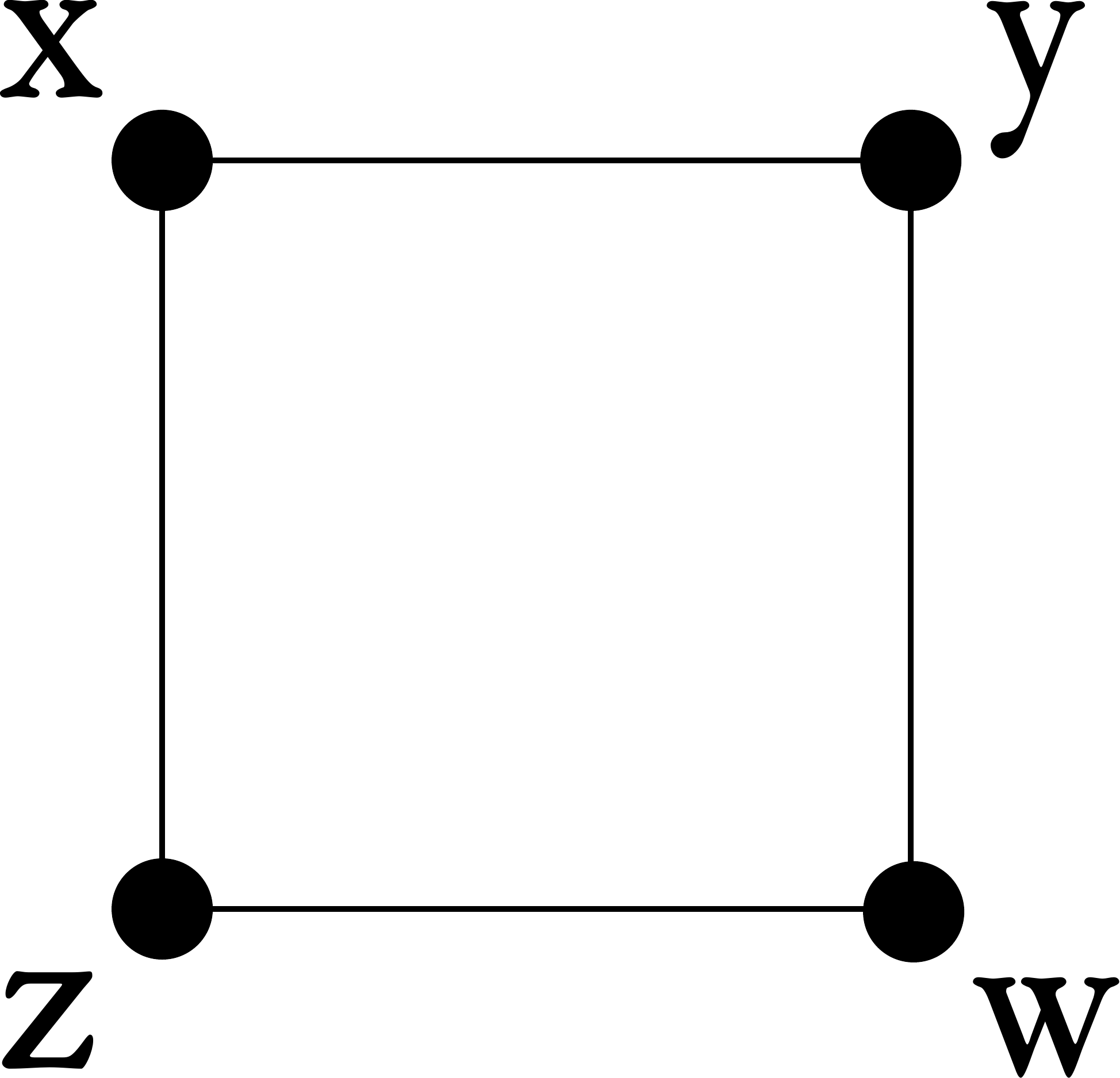}
\caption{$\widetilde{A}_3$ Coxeter diagram}
\label{diagram}
\end{figure}

\begin{lemma}
\label{squarelemma}
If a $4$-cycle with no diagonals in the $1$-skeleton of an $\widetilde{A}_3$ Coxeter complex has a type $z$ vertex, then the three remaining vertices are two type $y$ vertices and a type $z$ vertex. Furthermore, there exists an edge of type $xw$ whose endpoints are adjacent to all vertices of the $4$-cycle.
\end{lemma}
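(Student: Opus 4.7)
The plan is to use the ladder description of the $\widetilde A_3$ $1$-skeleton from Section~3. Each edge at a vertex corresponds to shifting a non-empty proper subset $S \subseteq \{0,1,2,3\}$ up one rung; this changes the \emph{type residue} $\sum_i L(i) \bmod 4$ by $|S| \bmod 4$. I label types so that $y$ is opposite $z$ in the Coxeter diagram (so $m_{yz}=2$), leaving $x, w$ as the two types adjacent to both $y$ and $z$.

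For the first assertion, let $A,B,C,D$ be the vertices of the 4-cycle with $A$ of type $z$, and let $S_1, S_2, S_3, S_4$ be the shift-subsets for the edges $AB, BC, CD, DA$. Cycle closure forces
\[
\chi_{S_1} + \chi_{S_2} + \chi_{S_3} + \chi_{S_4} = c \cdot (1,1,1,1), \qquad c = \tfrac{1}{4}\textstyle\sum|S_i| \in \{1,2,3\},
\]
and absence of the diagonals $AC, BD$ says that $\chi_{S_1}+\chi_{S_2}$ and $\chi_{S_2}+\chi_{S_3}$ each have range at least $2$ in the ladder (else they either collapse to constants, forcing $C=A$ or $B=D$, or represent edges themselves). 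A case analysis on $c$ and on the multiset $\{|S_i|\}$ rules out $c=1$ and $c=3$ (both force an $AC$ diagonal), and within the $c=2$ patterns $\{2,2,2,2\}$, $\{1,2,2,3\}$, $\{1,1,3,3\}$ only the uniform one survives. In that case, the no-diagonal condition forces $|S_i \cap S_{i+1}|=1$, which combined with the sum condition pins down $S_3 = \overline{S_1}$ and $S_4 = \overline{S_2}$; the type residues of $A,B,C,D$ become $2,0,2,0$, i.e.\ types $(z,y,z,y)$.

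For the furthermore statement, define
\[
W := A + \chi_{S_1 \cap S_2}, \qquad X := A + \chi_{S_1 \cup S_2}.
\]
Since $|S_1 \cap S_2|=1$ and $|S_1 \cup S_2|=3$, the type residues of $W$ and $X$ are $3$ and $1$, so one has type $x$ and the other type $w$. The ladder difference $L_X - L_W = \chi_{S_1 \triangle S_2}$ has size-$2$ support, so $X$ and $W$ are joined by an edge of type $xw$. Using $S_3 = \overline{S_1}$ and $S_4 = \overline{S_2}$, a direct computation shows that each of $L_X - L_V$ and $L_W - L_V$ for $V \in \{A, B, C, D\}$ is (in ladder form) an indicator function supported on one of $S_1 \cap S_2$, $S_1 \setminus S_2$, $S_2 \setminus S_1$, $S_1 \cup S_2$ or its complement; in particular, each has range $1$ and so represents an edge. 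Hence $X$ and $W$ are each adjacent to all four cycle vertices.

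The main obstacle is the case analysis in the first part. The mixed-size patterns $\{1,2,2,3\}$ and $\{1,1,3,3\}$ each break into several cyclic orderings, and within each one must track how consecutive subsets overlap in order to force either a diagonal or a degeneracy; the bookkeeping is elementary but requires care.
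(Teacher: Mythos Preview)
Your argument is correct and uses the same ladder machinery as the paper, but your organisation is heavier than necessary. The paper anchors the ladder at the given type-$z$ vertex and only tracks the two subsets $S_y,S_{y'}\subseteq\{0,1,2,3\}$ corresponding to the two edges leaving that vertex; the no-diagonal hypotheses then translate directly into the four Venn pieces $S_y\cap S_{y'}$, $S_y\setminus S_{y'}$, $S_{y'}\setminus S_y$, $(S_y\cup S_{y'})^c$ all being non-empty, and since there are only four coordinates this forces each to be a singleton with no further case split. Your route instead carries all four edge-subsets $S_1,\dots,S_4$ and the closure constraint, which is why you are left with the multiset analysis on $\{|S_i|\}$; this works, and your sketch of the $c\in\{1,3\}$ and mixed-size eliminations is sound, but it is exactly the work the paper's two-subset pigeonhole sidesteps. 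A second difference is in how types are read off: you use the residue $\sum_i L(i)\bmod 4$, which is fine but needs the (standard) identification of vertex types in the $\widetilde A_n$ complex with these residues; the paper instead observes that the central edge exhibits the $4$-cycle as the link of an edge, so the types of the cycle vertices are forced by which generators fix that edge. Your construction of the central edge via $S_1\cap S_2$ and $S_1\cup S_2$ is the same as the paper's, and your adjacency checks are correct once $S_3=\overline{S_1}$, $S_4=\overline{S_2}$ is established (note that ruling out $S_3=\overline{S_2}$ uses distinctness of $B$ and $D$, not just the absence of a $BD$ diagonal).
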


\begin{proof}
We use our description of the 1-skeleton in Theorem \ref{1skeleton}. By symmetry of the 1-skeleton, we can assume without loss of generality that the given $z$ vertex in the lemma is the origin and thus has the ladder of constant values (all balls on one rung). With some abuse of notation, we name the origin $z$ (which also happens to have type $z$), one adjacent vertex $y$, the other adjacent vertex $y'$, and the opposite vertex $a$. 

Consider now the ladders $L_z$, $L_y$, $L_{y'}$, and $L_a$ associated to $z$, $y$, $y'$, and $a$, respectively. Moving from $L_z$ to $L_y$ we increase a set of coordinates $S_y$. Similarly, moving from $L_z$ to $L_{y'}$ we increase a set of coordinates~$S_{y'}$. 

\textbf{Claim.} The assumption that the 4-cycle has no diagonals implies that $(S_y\cup S_{y'})^c$, $S_y-S_{y'}$, $S_{y'}-S_y$, and $S_y\cap S_{y'}$ are all nonempty. 

Indeed, the vertices $y$ and $y'$ are not joined by an edge if and only if $S_y-S_{y'}$ and $S_{y'}-S_y$ are nonempty. Furthermore, for $z$ and its opposite $a$ to not be joined by an edge, $a$ must have height two. Below we show that for this to be possible $S_y\cap S_{y'}$ and $(S_y\cup S_{y'})^c$ must be nonempty. 

The ladder $L_a$ must have some nonempty $C\subseteq S_y\cap S_{y'}$ on the third rung, some nonempty~$D$ with $S_y\triangle S_{y'}\cap D=\emptyset$, $D\subseteq (S_y\cup S_{y'})^c$ on the first rung, and $(S_y\cap S_{y'}-C)\cup ((S_y\cup S_{y'})^c-D)$ on the second rung (Figure \ref{4cycle}). 

\begin{figure}[h]
\centering
\includegraphics[height=7cm]{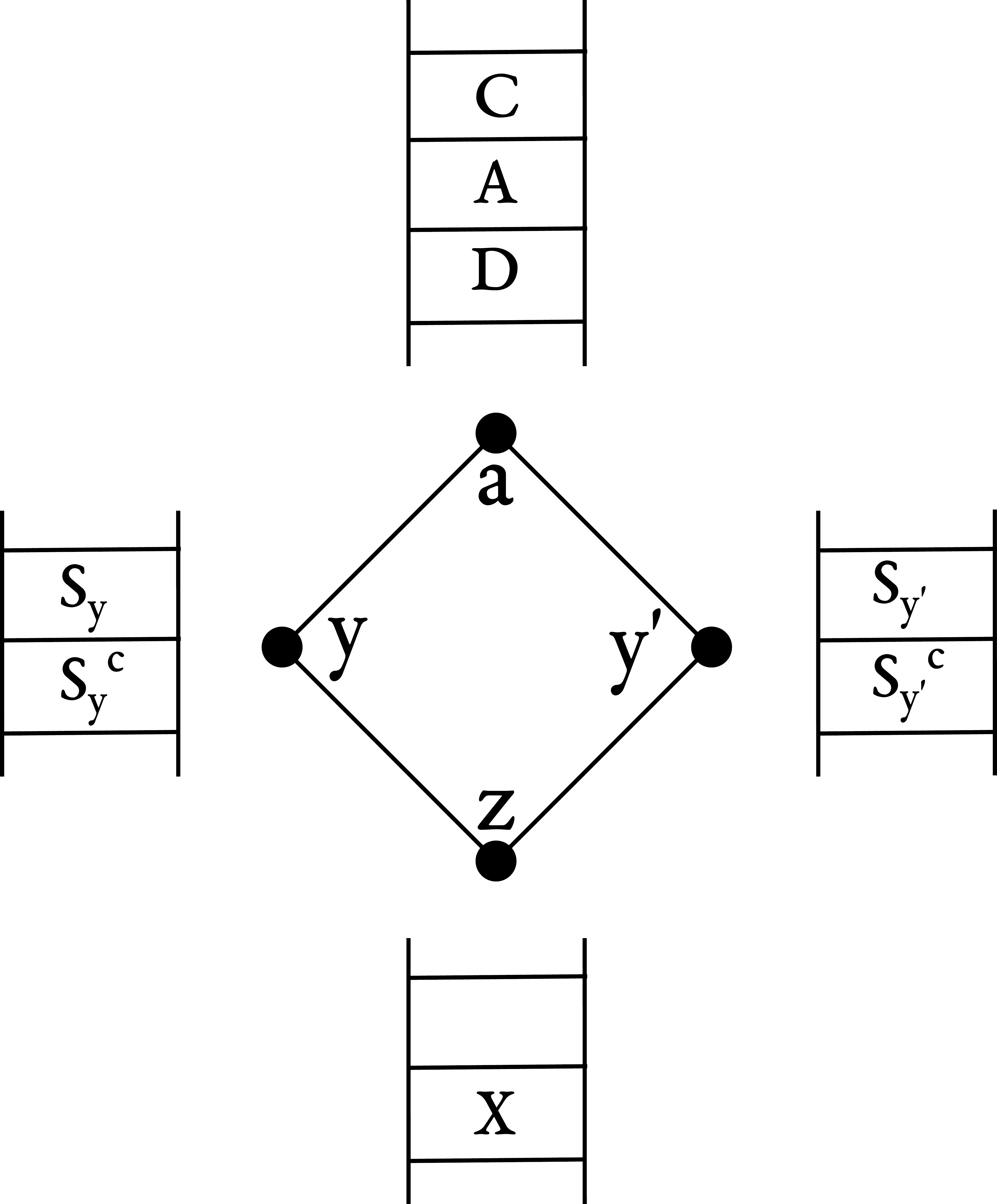}
\caption{$A=(S_y\cap S_{y'}-C)\cup ((S_y\cup S_{y'})^c-D)$}
\label{4cycle}
\end{figure}

We are always guaranteed an edge at the center of such a 4-cycle with endpoints joined to each vertex of the 4-cycle: The endpoints of the edge are the vertices reached from $L_z$ by increasing one of $S_y\cap S_{y'}$ or $S_y\cup S_{y'}$, as seen in Figure \ref{xwedge}. 

\begin{figure}[h]
\centering
\includegraphics[height=8cm]{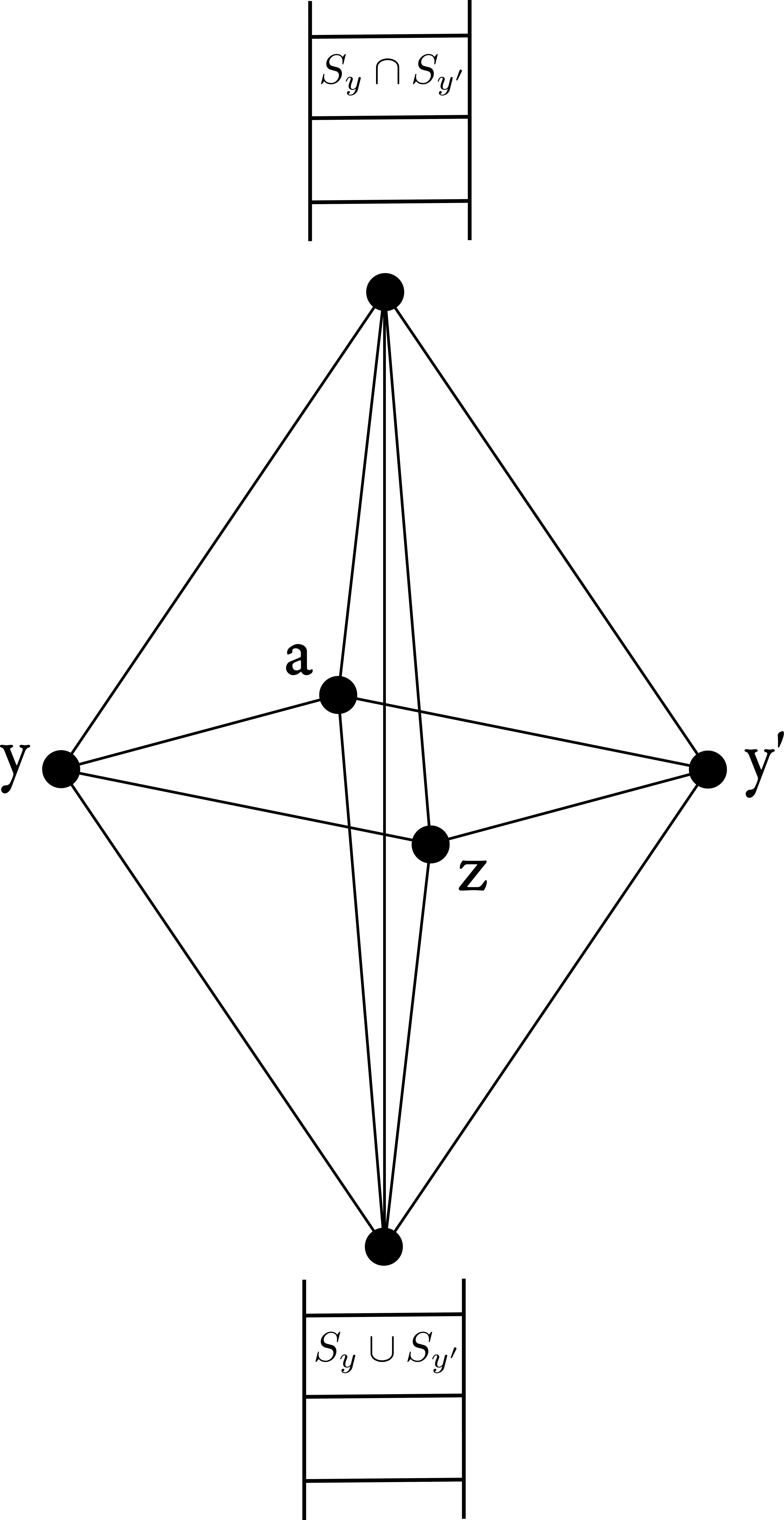}
\caption{The $xw$ edge}
\label{xwedge}
\end{figure}

Now we see that the $z$ and $a$ vertices are the vertices in two 3-simplices glued along their faces opposite the $z$ and $a$ vertices. Thus $a$ has type $z$. The same argument shows the other two vertices $y$ and $y'$ of the 4-cycle must be the same type. Also, they must correspond to a generator whose product with $z$ is order four, since we see a cycle of four simplices glued along their faces. Thus, the vertices $y$ and $y'$ must have type $y$. Furthermore, since vertices joined by an edge cannot have the same type, the only possible types for the endpoints of the central edge are $x$ and $w$. 
\end{proof}

\begin{proof}[Proof of Theorem \ref{wmbuildings}]
As a consequence of Theorem \ref{localglobal}, it suffices to prove local weak modularity. The triangle property given in $\ref{weakly modular}$.i is immediate: The set-up for the property is a vertex $a$, with two adjacent vertices $b$ and $c$ at distance 2 from $a$ (Figure \ref{trianglesetup}). Since $bc$ is an edge, there is an apartment containing the cells $a$ and $bc$. We can then apply weak modularity of the apartment to get a common neighbor of $a$, $b$, and $c$. 

\begin{figure}[h]
\centering
\includegraphics[height=3cm]{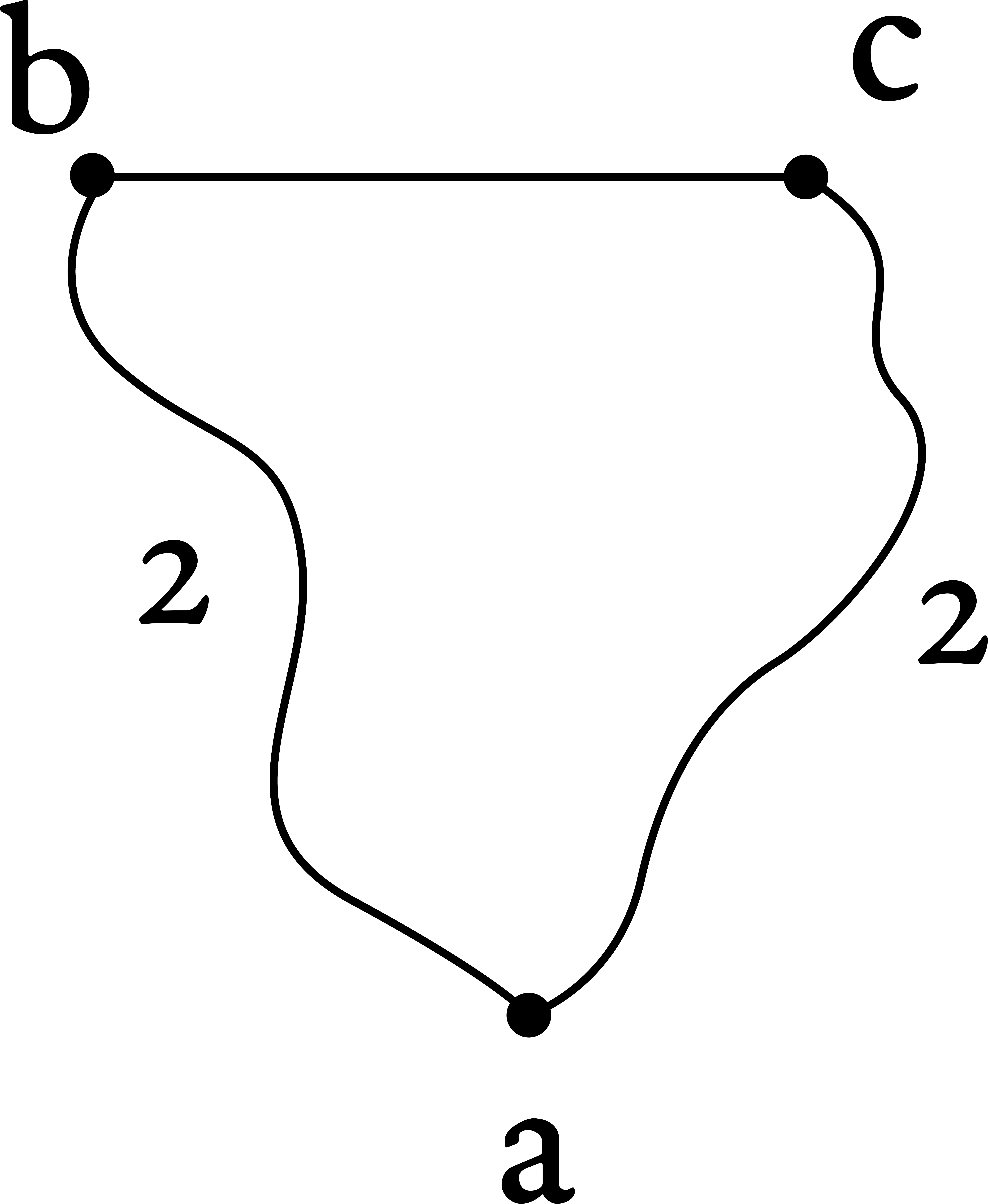}
\caption{Triangle property set-up}
\label{trianglesetup}
\end{figure}

Now consider the set-up for $\ref{weakly modular}$.ii, the quadrangle property. We are given a vertex $a$, with vertices $c$ and $d$ distance two from $a$, and a common neighbor $b$ of $c$ and $d$ at distance 3 from $a$ (Figure \ref{quadranglesetup}). We pick a length 2 path from $a$ to $d$, with an intermediate vertex $f$. We also pick a length two path from $a$ to $c$ with an intermediate vertex~$e$. 

\begin{figure}[h]
\centering
\includegraphics[height=3.5cm]{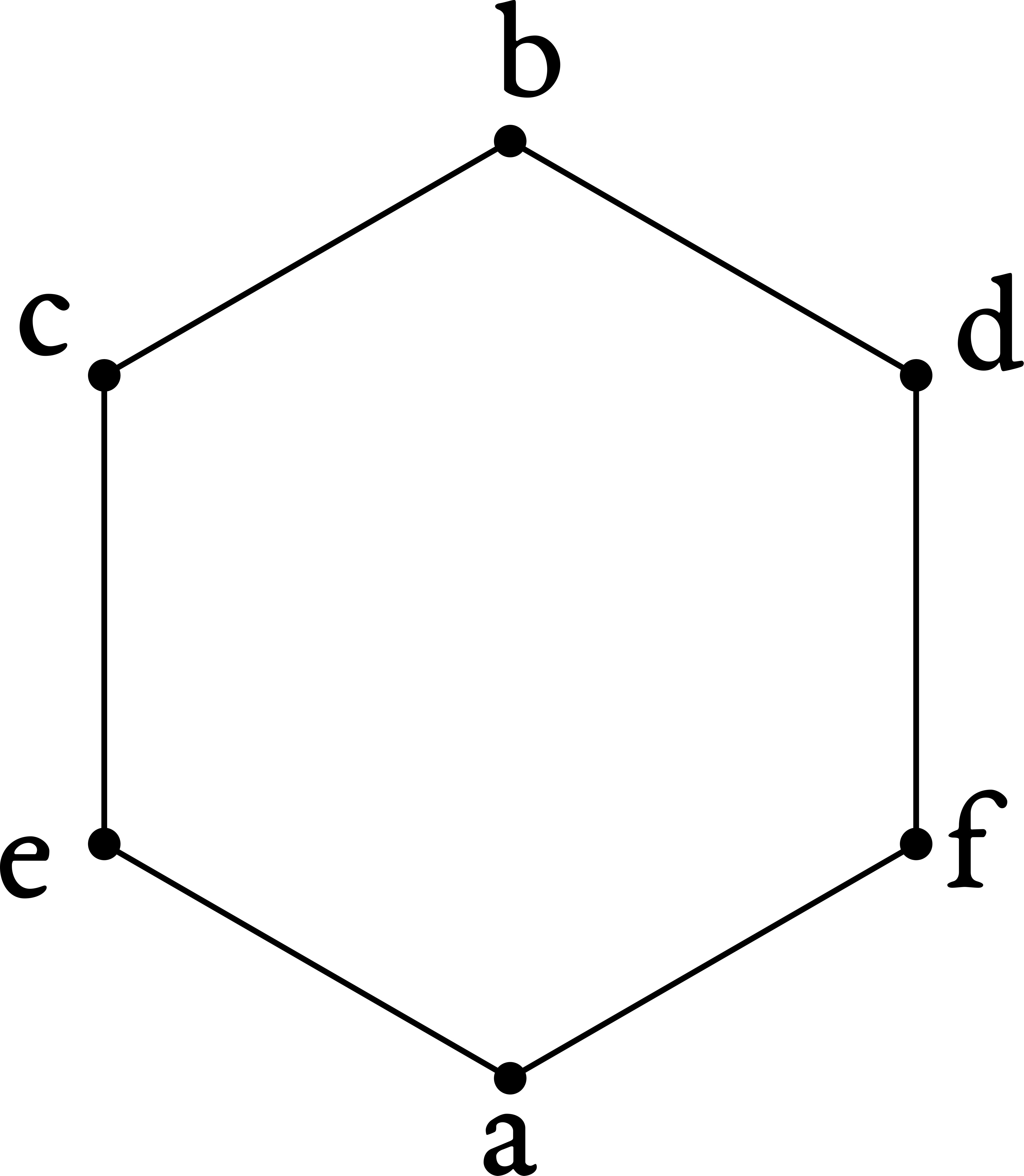}
\caption{Quadrangle property set-up}
\label{quadranglesetup}
\end{figure}

Without loss of generality, we can assume vertex $d$ has type $z$. We proceed by examining the link $lk(d)$ inside some apartment $\Sigma$ containing the edges $bd$ and $df$, pictured in Figure \ref{linkd}.

\begin{figure}[h]
\centering
\includegraphics[height=3.5cm]{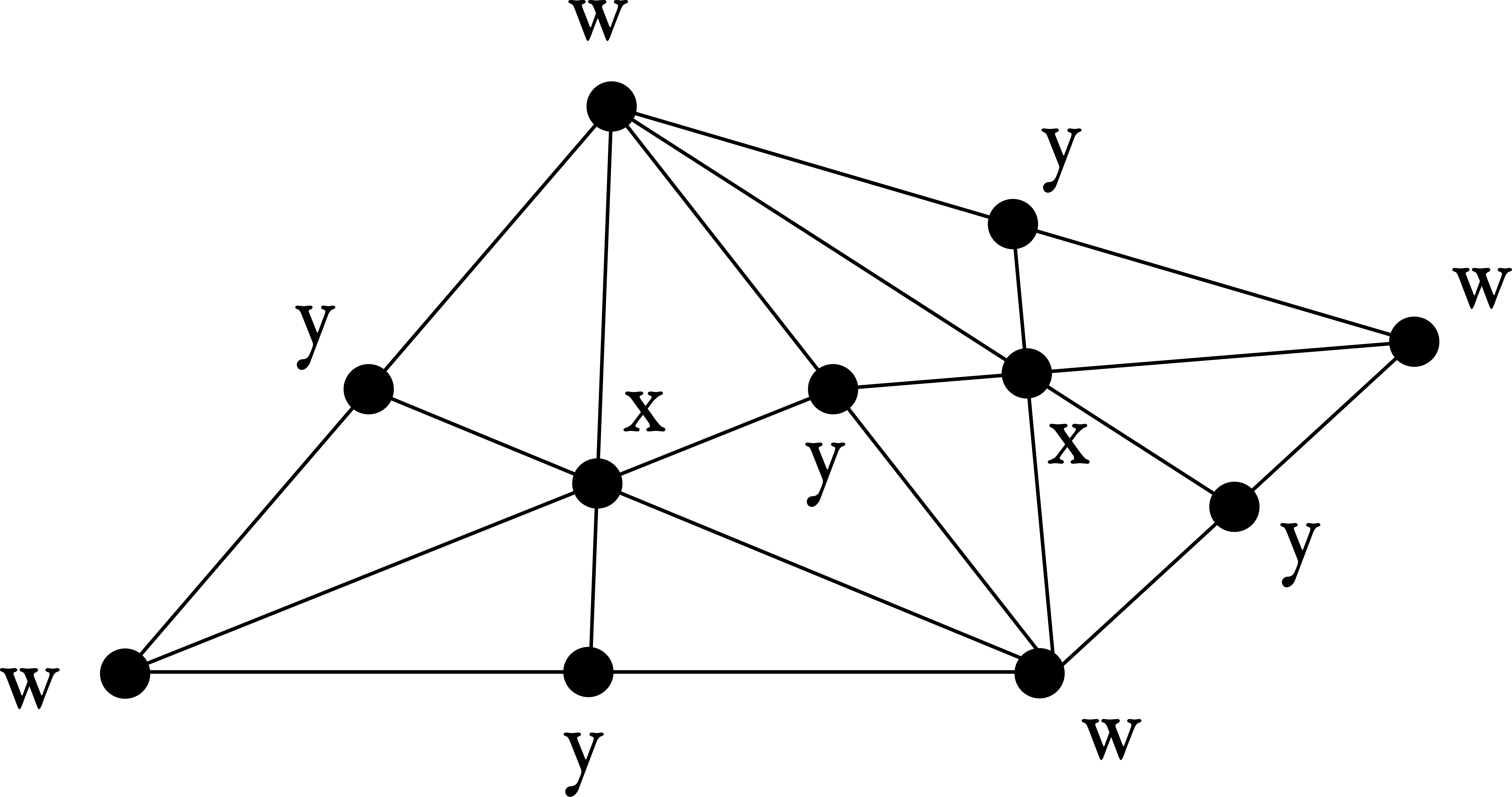}
\caption{The link of $d$}
\label{linkd}
\end{figure}

Next, we consider the realization of $\Sigma$ inside $\mathbb{R}^3$ where generators of $\widetilde{A}_3$ act by reflections through faces of a fixed 3-simplex. We can compute the dihedral angle between faces of the simplex by considering the order of compositions of reflections. The dihedral angle between two faces meeting at an edge $e$ is $\pi$ divided by the order of the product of reflections through the faces. For example, consider the two faces of a simplex containing an edge $yw$. The $yw$ edge has opposite vertices $x$ and $z$ and the action of $x$ and $z$ reflect through the faces meeting along $yw$. Since $xz$ has order three the dihedral angle between the 2-simplices is $\pi/3$. This observation allows us to compute dihedral angles between faces in $lk(d)$, the subcomplex of $\Sigma$ consisting of those faces opposite $d$ of simplices containing $d$. For example, any 2-simplices meeting along an $xw$ edge in $lk(d)$ are the boundary faces of two 3-simplices in $St(d)$. These 3-simplices each have dihedral angle $\pi/2$ at the edge $xw$, so the two 2-simplices have dihedral angle $\pi$ in $lk(d)$. 

$St(d)$ is a closed, convex intersection of half-spaces with boundaries the span of the faces of $lk(d)$. The dihedral angle between any two faces is less than $\pi$, except for those half-spaces meeting along $xw$ edges, which meet at angle $\pi$. The vertices $b$ and $f$ are some pair of vertices on the boundary of $St(d)$. Supposing $b$ and $f$ are not vertices opposite a common $xw$ edge, the $\mathrm{CAT}(0)$ geodesic connecting $b$ and $f$ intersects the interior of $St(d)$. Since apartments are convex subcomplexes, this implies that any apartment containing $b$ and $f$ also contains $d$. In particular, if $\Sigma$ is an apartment containing $bc$ and $af$, then $\Sigma$ also contains $d$. The quadrangle property in $\Sigma$ then provides us with a vertex adjacent to $c$, $d$, and $a$. The only case where the $\mathrm{CAT}(0)$ geodesic joining $b$ and $f$ does not intersect the interior of $St(d)$ is when $b$ and $f$ are opposite a common $xw$ edge. In this case, we know $b$ and $f$ are both type $y$ since $d$ is type $z$. 



Thus we can assume that $a$ and $d$ are joined only by vertices of type $y$. Otherwise, we could replace $f$ with a vertex of type not $y$ and the above argument completes the quadrangle property.

\begin{figure}[h]
\centering
\includegraphics[height=3.5cm]{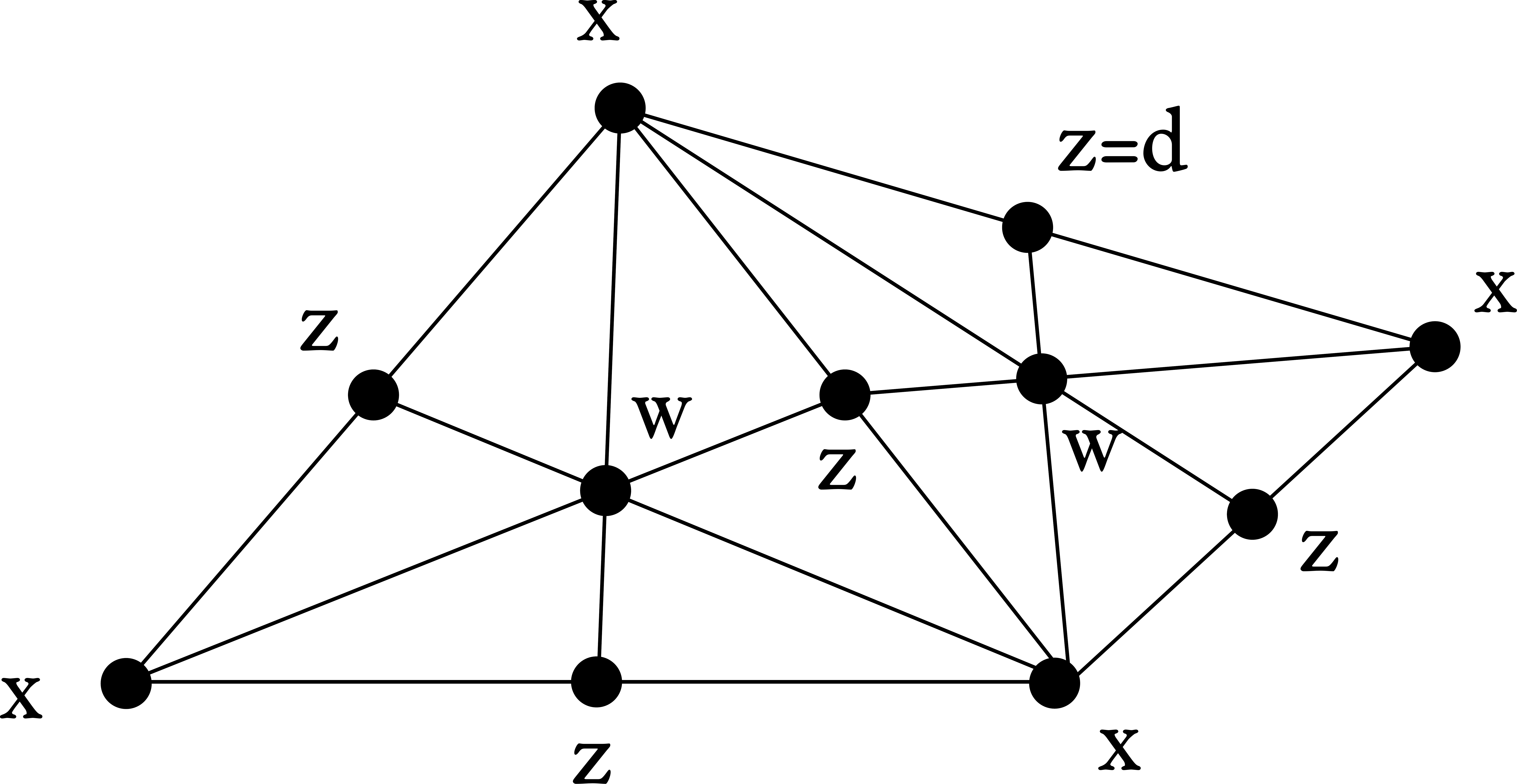}
\caption{The link of $f$}
\label{linkf}
\end{figure}

Now consider $lk(f)$ in some apartment containing $df$ and $af$ (Figure \ref{linkf}). Recall $d\in lk(f)$ is a type $z$ vertex. By inspection, we see $a$ must be the $z$ vertex on the opposite side of $lk(f)$, since all other vertices are either adjacent to $d$ or joined to $d$ by a vertex of type not $y$. Thus the combinatorial geodesic $dfa$ is also a $\mathrm{CAT}(0)$ geodesic in our building. 

By symmetry, we can assume that $e$ is a type $y$ vertex and that $cea$ is a $\mathrm{CAT}(0)$ geodesic. In particular, this implies $c$ is a type $z$ vertex. We have reduced to the case of a 6-cycle with alternating $z$ and $y$ types, as in Figure \ref{6cycle}. 

\begin{figure}[h]
\centering
\includegraphics[height=3.5cm]{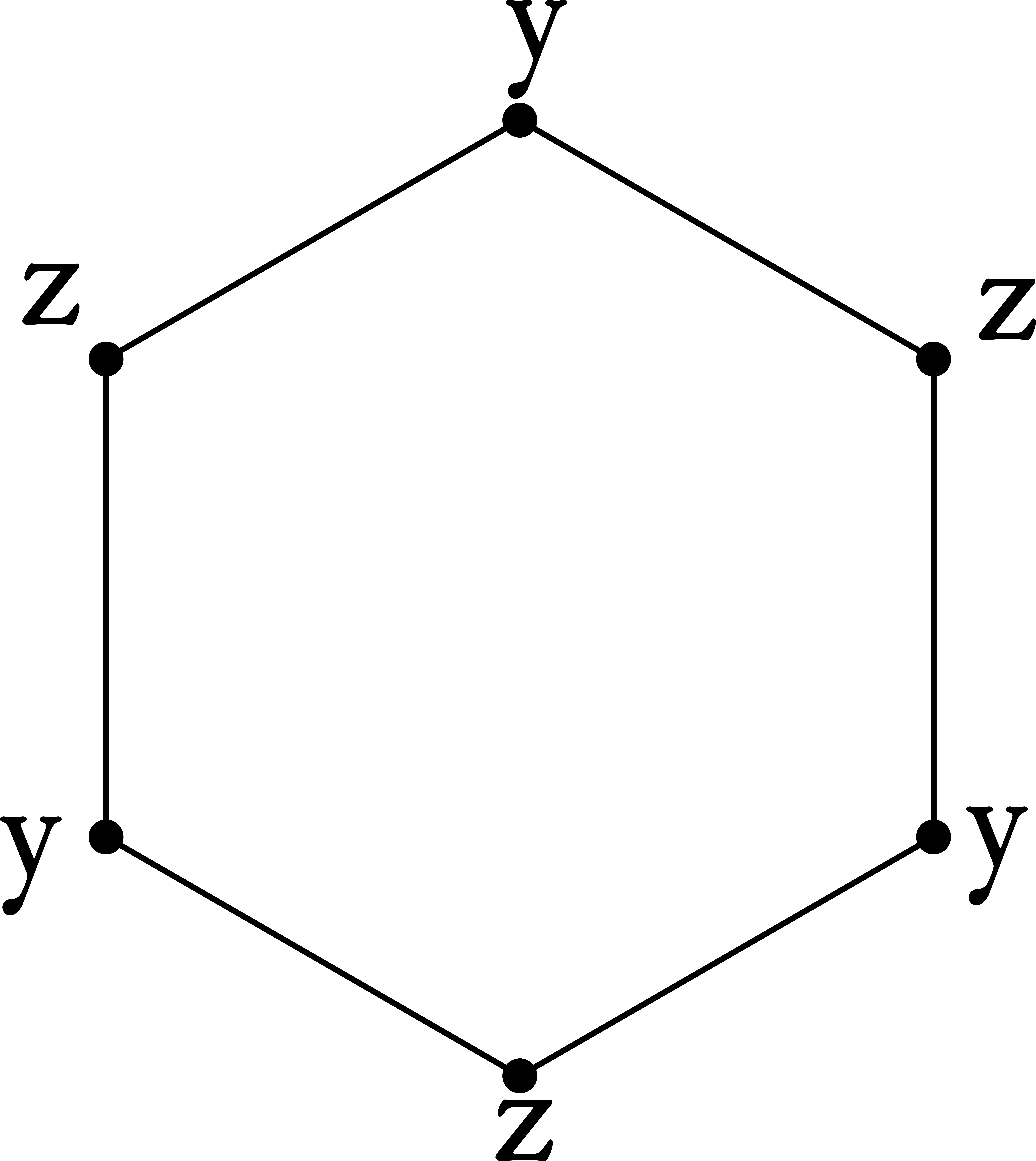}
\caption{6-cycle with alternating types}
\label{6cycle}
\end{figure}

Now, if we invert the diagram (i.e. consider $b$ to be our initial vertex and $e$, $f$, and $a$ to be the vertices relevant for the quadrangle property), then we can repeat our above argument. There are two possible outcomes. We either complete the quadrangle property and find a vertex adjacent to $b$, $e$, and $f$; or we get that $ecb$ and $fdb$ are $\mathrm{CAT}(0)$ geodesics. 

In the case that $ecb$ and $fdb$ are $\mathrm{CAT}(0)$ geodesics, we get that $bcea$ and $bdfa$ are $\mathrm{CAT}(0)$ geodesics and our diagram collapses, i.e. $c=d$ and $e=f$, in which case $e$ is a vertex completing the quadrangle property. 


\begin{figure}[h]
\centering
\includegraphics[height=4cm]{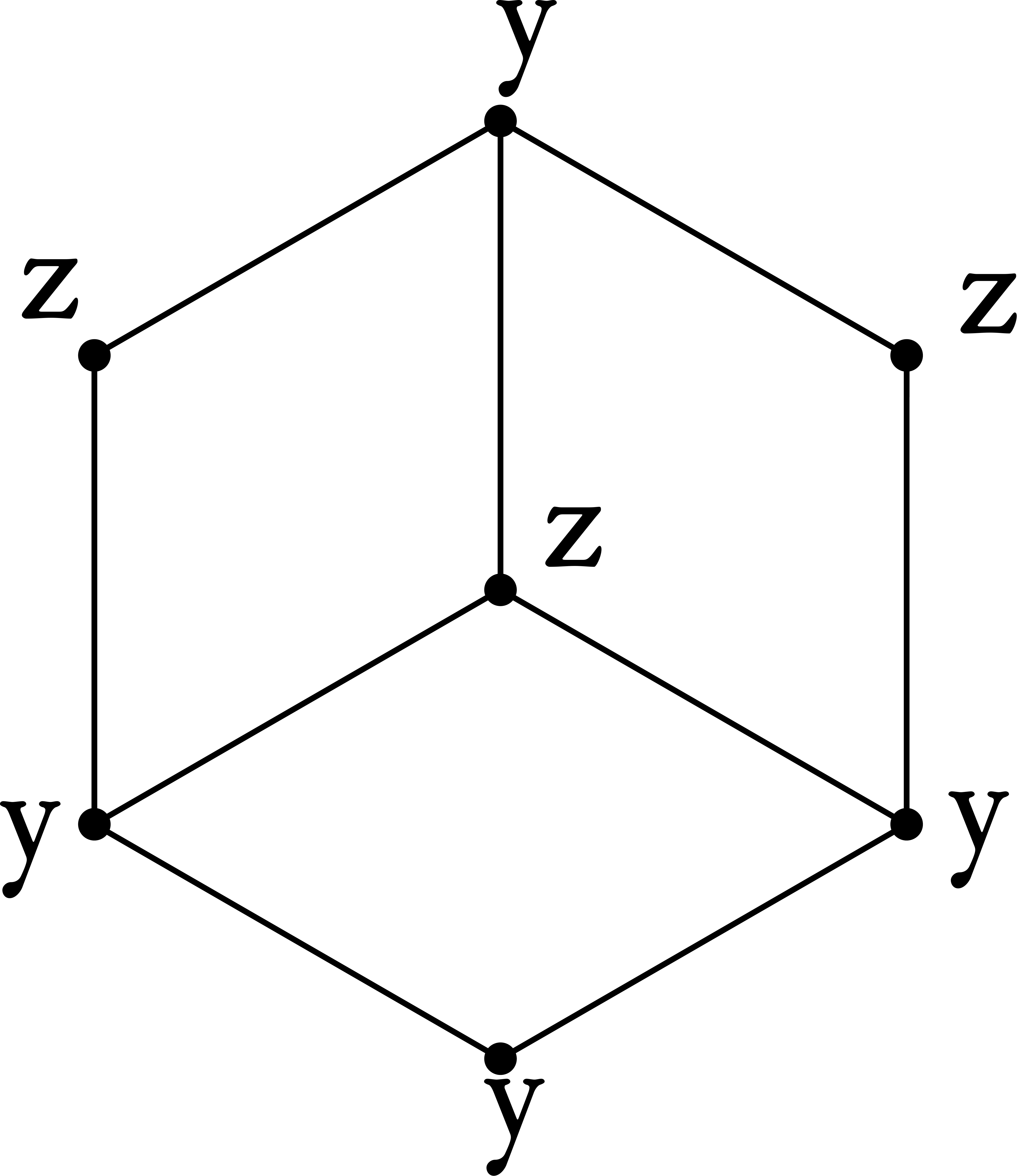}
\caption{Inverted quadrangle property}
\label{invertedquadrangle}
\end{figure}

In the case that the quadrangle property is completed in the inverted diagram, we get a vertex adjacent to $b$, $e$, and $f$. This separates our diagram into three 4-cycles (Figure \ref{invertedquadrangle}). Lemma \ref{squarelemma} implies the central vertex has type $z$ and each 4-cycle has an $xw$ edge in its center, with each endpoint adjacent to each vertex of the 4-cycle. We call the type $x$ vertices $x_1$, $x_2$, and $x_3$. We call the type $w$ vertices $w_1$, $w_2$, and $w_3$, where $w_i$ is adjacent to $x_i$ for $i=1,2,3$. 

Consider the case in which two of these vertices are the same, say $x_1=x_2$. $St(x_1)$ is a convex subcomplex of the building. Since $St(x_1)$ contains the two $y$ vertices adjacent to $x_3$ and the geodesic between the two $y$'s intersects the interior of an edge containing $x_3$, we get that $x_3\in St(x_1)$. But this is a contradiction, since there cannot be adjacent vertices of the same type in a Coxeter complex. So $x_1$, $x_2$, and $x_3$ must be distinct. Analogously, we can assume $w_1$, $w_2$, and $w_3$ are distinct. 

\begin{figure}[h]
\centering
\includegraphics[height=6cm]{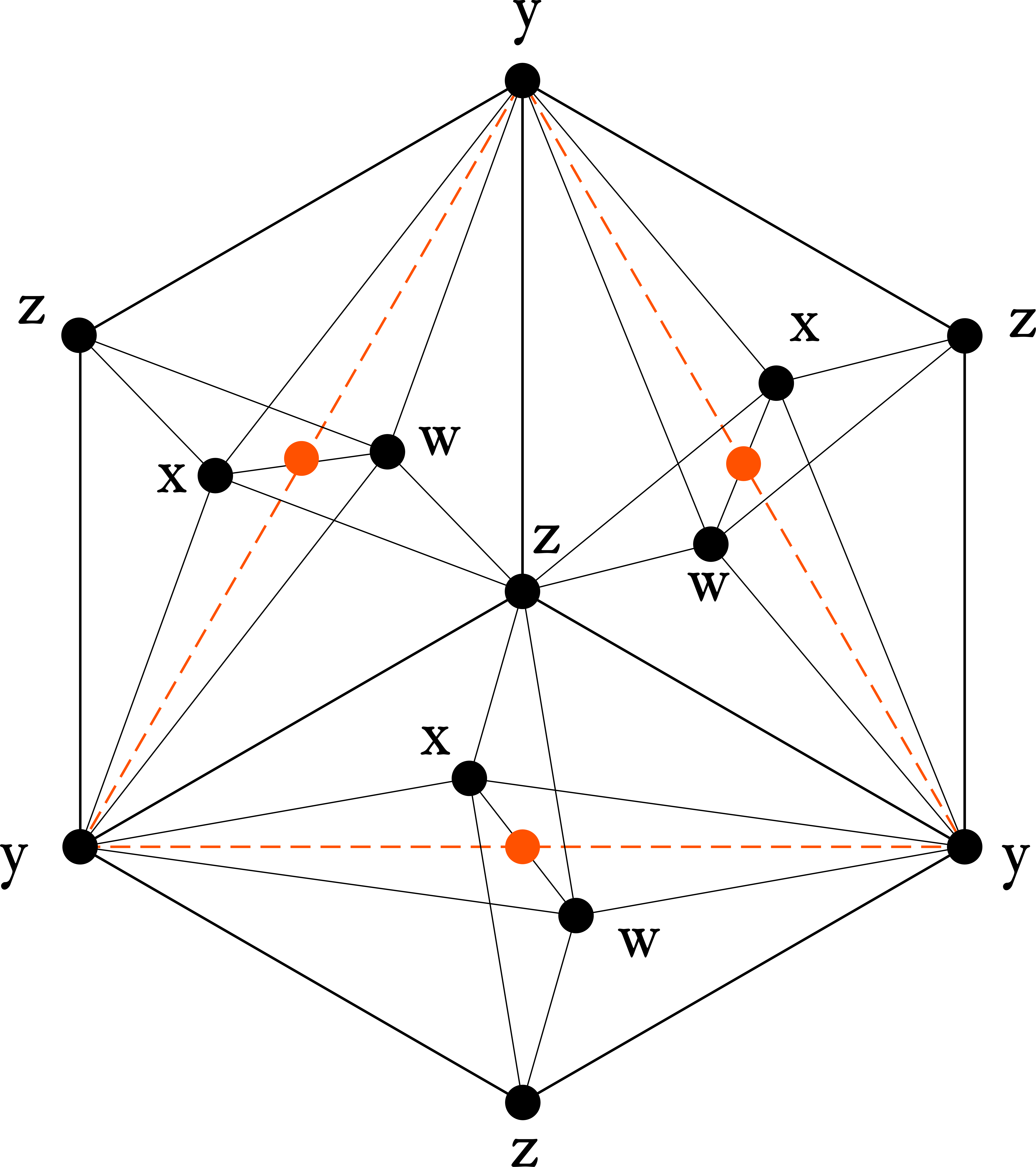}
\caption{A locally geodesic loop}
\end{figure}

Consider now the piecewise geodesic cycle joining each of the three $y$ vertices in the outer 6-cycle to the midpoints of the $x_iw_i$ edges, for $i=1,2,3$. This cycle is made up of six segments. Each segment has length $\pi/4$, since eight such congruent segments make up a geodesic cycle in an $A_3$ Coxeter complex. Any two adjacent geodesic segments are contained in an apartment also containing the central $z$ vertex. The link $lk(z)$ inside such an apartment is an $A_3$ Coxeter complex, which is a sphere tiled by $xyw$ triangles. These triangles have angle $\pi$ at their $y$ vertex and angle $\pi/3$ at their $x$ and $w$ vertices. Adjacent geodesic segments either meet at an $xw$ edge or a $y$ vertex. Since the $xyw$ triangles are isosceles, segments meeting at the midpoint of an $xw$ edge meet at angle $\pi$. Since four $xyw$ triangles meet at a $y$ vertex, segments meeting at $y$ and joining midpoints of opposite $xw$ edges also meet at angle $\pi$. Thus the piecewise geodesic cycle is in fact a locally geodesic cycle. But then we have a locally geodesic cycle of length less than $2\pi$ in $lk(z)$, a $\mathrm{CAT}(1)$ space. This contradiction completes the proof.

\end{proof}

\bibliographystyle{amsalpha}
\providecommand{\bysame}{\leavevmode\hbox to3em{\hrulefill}\thinspace}
\providecommand{\MR}{\relax\ifhmode\unskip\space\fi MR }
\providecommand{\MRhref}[2]{%
  \href{http://www.ams.org/mathscinet-getitem?mr=#1}{#2}
}
\providecommand{\href}[2]{#2}

\end{document}